\documentclass{scrartcl}
\usepackage[utf8]{inputenc}
\usepackage[T1]{fontenc}
\usepackage{lmodern}
\usepackage{amsfonts,amssymb,amsthm,amsmath,mathabx,mathtools,mathrsfs}
\usepackage[mathcal]{eucal}
\usepackage{microtype}
\usepackage{xcolor}
\usepackage[margin=1.25in]{geometry}
\usepackage[backend=biber,style=alphabetic,maxnames=10,maxalphanames=3,minalphanames=3]{biblatex}
\definecolor{greencolor}{rgb}{0,0.45,0}
\definecolor{ucphcolor}{rgb}{0.517,0.016,0.016}
\usepackage[colorlinks,citecolor=greencolor,linkcolor=ucphcolor,
            urlcolor=greencolor,unicode,linktocpage]{hyperref}
\usepackage[capitalize]{cleveref}

\usepackage{cjhebrew}

\DeclareFontFamily{U}{rcjhbltx}{}
\DeclareFontShape{U}{rcjhbltx}{m}{n}{<->rcjhbltx}{}
\DeclareSymbolFont{hebrewletters}{U}{rcjhbltx}{m}{n}
\DeclareMathSymbol{\tsadi}{\mathord}{hebrewletters}{118}

\newtheorem{nul}{}[section]
\theoremstyle{definition}
\newtheorem{rmk}[nul]{Remark}
\newtheorem{rec}[nul]{Recollection}
\newtheorem{defn}[nul]{Definition}
\theoremstyle{plain}
\newtheorem{corr}[nul]{Corollary}
\newtheorem{ques}[nul]{Question}
\newtheorem{lem}[nul]{Lemma}
\newtheorem{thm}[nul]{Theorem}
\newtheorem{prop}[nul]{Proposition}

\newcommand{\bb}[1]{\mathbb{#1}}
\newcommand{\mc}[1]{\mathcal{#1}}
\newcommand{\Fun}{\mathrm{Fun}}
\newcommand{\Sp}{\mathrm{Sp}}
\newcommand{\syn}[1]{\mathrm{Syn}_{#1}}

\DeclareMathOperator*{\colim}{colim}
\crefname{thm}{theorem}{theorems}
\Crefname{thm}{Theorem}{Theorems}

\title{The Segal conjecture for large objects and semiadditive lifts}
\author{Vignesh Subramanian}
\date{}

\begin{document}
\maketitle

\begin{abstract}
We prove a projection formula for homotopy fixed points of trivial actions of finite $p$-groups. More precisely,
for every bounded-below, $p$-complete spectrum $X$ whose mod-$p$ homology is $I$-complete, the canonical map
\[
(\bb S_p)^{BG}\otimes X
\xrightarrow{\simeq} X^{BG}
\]
is an equivalence for every finite $p$-group $G$, where $I$ is the augmentation ideal of the Steenrod
algebra.

Building on earlier joint work with Burklund \cite{BSlarge2024}, our proof combines the
Adams--Gunawardena--Miller calculation \cite{AGM} with an adaptation of Carlsson's induction
\cite{carlsson1984equivariant}. As an application, using the work of Yuan \cite{YuanSphere}, we construct
height-one semiadditive lifts of such spectra and show that their cohomology of $p$-finite
spaces depends only on the $1$-type.
\end{abstract}

\tableofcontents

\section{Introduction}

The Segal conjecture is a stable cohomotopy analogue of the Atiyah--Segal completion theorem. Recall that, for
a compact Lie group $G$, the latter identifies
\[
KU^0(BG)\cong R(G)^\wedge_I,
\]
where $R(G)$ is the complex representation ring of $G$ and $I$ is its augmentation ideal. Motivated by this,
Segal conjectured that, for a finite group $G$, the stable cohomotopy of $BG$ is given by
\[
\pi_0(\bb S^{BG})\cong A(G)^\wedge_I,
\]
where $A(G)$ is the Burnside ring of $G$, completed
at its augmentation ideal.

The ccompletion theorem for the sphere spectrum $\bb S$ is considerably more subtle. The Segal
conjecture was established through a sequence of remarkable developments:

\begin{enumerate}
    \item The case $G=C_p$ was proved by Lin andGunawardena \cite{lin1980conjectures, gunawardena1980segal} (see also \cite{lin1980calculation}).

    \item The case of elementary abelian $p$-groups was settled by Adams, Gunawardena, and Miller \cite{AGM}.

    \item Finally, Carlsson \cite{carlsson1984equivariant} completed the proof for all finite groups, using an induction argument building on the elementary abelian case.
\end{enumerate}

The proofs for elementary abelian $p$-groups \cite{lin1980calculation, lin1980conjectures,
gunawardena1980segal, AGM} rely on computations in the Adams spectral sequence, while Carlsson's proof
uses genuine equivariant stable homotopy theory to carry out an induction from elementary abelian
$p$-groups. In fact, these results establish a stronger statement: for every finite $p$-group $G$,
the $p$-completed genuine $G$-sphere is Borel complete\footnote{By the work of \cite{mayMcClure},
for finite $p$-groups the augmentation ideal completion can be replaced by $p$-completion.}.

For $G=C_p$, an equivalent formulation of the
Segal conjecture asserts that the canonical map
\[
\mathrm{can}:\bb S\longrightarrow\bb S^{tC_p}
\]
is a $p$-complete equivalence, where the Tate construction is taken with respect to the trivial
action. By exactness, the same holds for every finite spectrum. Yuan \cite{yuan2021integral}
extended this result to arbitrary direct sums of copies of the sphere spectrum. This naturally
raises the following question:

\begin{ques}
For which spectra $X$ is the canonical map $\mathrm{can}:X\longrightarrow X^{tC_p}$
a $p$-complete equivalence?
\end{ques}

This question was investigated in joint work of the author with Burklund \cite{BSlarge2024}. To state the main result of that work, we recall the following definition.

\begin{defn}[{\cite[Definition~1.3]{BSlarge2024}}]
We say that a spectrum $X$ is $I$-complete if its $\bb F_p$-homology satisfies
\[
H_*(X)\cong \lim_n H_*(X)/I^nH_*(X),
\]
where $I$ is the augmentation ideal of the mod-$p$ Steenrod algebra $\mc A$.
\end{defn}

\begin{thm}[{\cite[Theorem~3.3]{BSlarge2024}}]
Let $X$ be a bounded-below, $p$-complete, $I$-complete spectrum. Then the canonical map
\[
X\xrightarrow{\simeq}X^{tC_p}
\]
is an equivalence.
\end{thm}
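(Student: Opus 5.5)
We will prove the statement by comparing the Adams spectral sequences (based on $\bb F_p$-homology) of the source and target, reducing the question to a computation of $\mathrm{Ext}$ over the Steenrod algebra $\mc A$ that is essentially the Lin--Gunawardena calculation. Write $X^{tC_p}\simeq \lim_n (\Sigma P_{-n}^{\infty}\otimes X)$, where $P_{-n}^\infty$ are the stunted (Thom) spectra of multiples of the sign/regular representation over $BC_p$. This presentation is valid for bounded-below $X$, since the Tate construction commutes with the relevant colimits in each finite range of degrees. The key algebraic input is the Singer construction $R_+(M)$: for an $\mc A$-module $M$, the continuous homology $\lim_n H_*(\Sigma P_{-n}\otimes X)$ is identified with $R_+(H_*X)$. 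The structure map $\epsilon\colon R_+(M)\to M$ is a $\mathrm{Tor}^{\mc A}$- (equivalently $\mathrm{Ext}$-) isomorphism for every $\mc A$-module $M$ (Adams--Gunawardena--Miller). Thus the map $\mathrm{can}$ induces an isomorphism on the algebraic input of suitably formulated Adams spectral sequences.

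The steps are as follows. First, set up an inverse-limit Adams spectral sequence for $X^{tC_p}=\lim_n(\Sigma P_{-n}\otimes X)$, with $E_2$-term $\mathrm{Ext}_{\mc A}$ computed from the pro-system of homologies $\{H_*(\Sigma P_{-n}\otimes X)\}_n$. Second, show that $\mathrm{can}$ induces an isomorphism of $E_2$-pages, using the Singer construction statement above for a general (possibly infinite) $\mc A$-module $M=H_*X$. Third, prove convergence of both spectral sequences. For $X$, this follows from $X$ being bounded below and $p$-complete, which gives convergence of the classical Adams spectral sequence to $\pi_*X$. For $X^{tC_p}$, one must control $\lim^1$ terms and show that the limit spectral sequence converges to $\pi_*X^{tC_p}$ rather than to some completion of it.

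The main obstacle is the second and third steps for large $X$. The AGM isomorphism is an $\mathrm{Ext}$-isomorphism of the pro-system, but the continuous homology $R_+(H_*X)$ is only a limit. To identify $\mathrm{Ext}$ of the limit with the limit of $\mathrm{Ext}$'s, and to rule out phantom $\lim^1$ contributions, one needs a Mittag-Leffler-type control on the tower $H_*(\Sigma P_{-n}\otimes X)$. This is exactly where the $I$-completeness hypothesis enters. I would first try to show that, for $I$-complete $M$, the pro-object $\{H_*(\Sigma P_{-n})\otimes M\}$ is pro-isomorphic to one whose limit is $R_+(M)$ with vanishing $\lim^1$. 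The proof should use a filtration of $M$ by the $I$-adic filtration $I^kM$. On each associated graded piece $I^kM/I^{k+1}M$, which is a trivial $\mc A$-module, hence a sum of copies of $\bb F_p$, the result reduces to Yuan's case of sums of spheres. Then pass to the limit in $k$ using $M\cong\lim_k M/I^kM$. If this works, the $E_2$-isomorphism together with convergence on both sides gives that $X\to X^{tC_p}$ is an equivalence.
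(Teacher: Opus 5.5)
Your Step 2 rests on a misidentification, and this is a genuine gap.

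\textbf{Step 2.} For the trivial action, the continuous homology $\lim_n H_*(\Sigma P_{-n})\otimes H_*X$ is a completed tensor product $H_*(\Sigma P^\infty_{-\infty})\hat\otimes H_*X$ with the diagonal coaction. It is not the Singer construction $R_+(H_*X)$. The Singer construction models $(B^{\otimes p})^{tC_p}$ with the cyclic permutation action (Lun\o{}e-Nielsen--Rognes). Accordingly, the AGM result that $\epsilon\colon R_+(M)\to M$ is a Tor-equivalence for every $M$ is the algebraic input for the Tate diagonal $B\to(B^{\otimes p})^{tC_p}$, not for $\mathrm{can}\colon X\to X^{tC_p}$.

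For the trivial action, the $E_2$-comparison is false for general $M$. Take $X=H\bb F_p$. Each $H_*(\Sigma P_{-n})\otimes\mc A_*$ is an extended comodule, so the target has $\mathrm{Ext}\cong\lim_n H_*(\Sigma P_{-n})$ in filtration $0$, which is nonzero in infinitely many degrees. The source has $\mathrm{Ext}\cong\bb F_p$. Correspondingly, $H\bb F_p\to H\bb F_p^{tC_p}$ is not an equivalence. As a sanity check: if your Step 2 held, the argument would prove the statement for every bounded-below $p$-complete $X$. So $I$-completeness is not a device for $\lim^1$-control; it is exactly what makes the algebraic comparison true at all.

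\textbf{The final paragraph.} Its shape is right: it is essentially the argument of \cite{BSlarge2024}, mirrored in \Cref{prop:ea-continuity}. The ingredients, however, must be replaced.
The only computational input is Lin--Gunawardena for $M=\bb F_p$, namely that $\bb F_p\to\lim_n H_*(\Sigma P_{-n})$ is an equivalence in $\mathrm{Stable}(\mc A_*)$.
This extends to arbitrary bounded-below trivial comodules because every vector space is a retract of a product of copies of $\bb F_p$, and both sides preserve such products in a bounded-below range.
Yuan's theorem for sums of spheres is not a substitute: it is a statement about spectra, not about the algebraic comparison you need on $I^kM/I^{k+1}M$.
One then passes to $M/I^kM$ by extensions, and to $M\simeq\lim_k M/I^kM$ via \Cref{lem:ea-limits}. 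Here one uses that each $H_*(\Sigma P_{-n})$ is dualizable, so $\lim_n(H_*(\Sigma P_{-n})\otimes-)$ commutes with $\lim_k$.

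\textbf{Step 3.} You leave this open, and for non-finite-type $X$ it is not routine. The tower $H_*(\Sigma P_{-n})\otimes H_*X$ is not uniformly bounded below, so its limit has no naive comodule structure, and the inverse-limit Adams spectral sequence is only conditionally convergent.
The synthetic argument avoids this entirely. The map $\nu X\to\lim_n\nu(\Sigma P_{-n}\otimes X)$ is a map of $\tau$-complete objects in $\syn{\bb F_p}$ whose reduction mod $\tau$ is the algebraic comparison above, hence it is an equivalence. Inverting $\tau$ commutes with the limit by \cite[Lemma~2.7]{BSlarge2024}, which gives $X\simeq X^{tC_p}$.
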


The proof in \cite{BSlarge2024} builds on the work of Lin and Gunawardena, using the category of
synthetic spectra introduced by Pstr\k{a}gowski \cite{pstrągowski2022synthetic}, which provides
a natural categorification of the Adams spectral sequence.

\begin{rmk}
The condition of $I$-completeness may initially seem rather restrictive. Roughly speaking, it
allows us to recover the homology from successive quotients whose associated graded pieces have
trivial Steenrod action. In this sense, the homology behaves algebraically like an object
built from simpler pieces. However, an $I$-complete spectrum need not split as a direct
sum of finite spectra.
\end{rmk}

In this article, we extend the Segal conjecture for $I$-complete spectra to all finite $p$-groups.
More precisely, we prove the following theorem.

\begin{thm}\label{thm:main}
(See \Cref{thm:segal-p-groups}.)
Let $X$ be a bounded-below, $p$-complete, $I$-complete spectrum. Then, for every finite
$p$-group $G$, the canonical projection map
\[
(\bb S_p)^{BG}\otimes X
\xrightarrow{\simeq}
X^{BG}
\]
is an equivalence, where the tensor product is taken in the category of $p$-complete spectra.
\end{thm}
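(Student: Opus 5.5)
The plan is to follow the classical route: first settle elementary abelian $p$-groups by an AGM-type argument, then induct to all finite $p$-groups à la Carlsson. Throughout, work in $p$-complete spectra. Call a bounded-below $p$-complete $X$ \emph{good for $G$} if the projection map $(\bb S_p)^{BG}\otimes X\to X^{BG}$ is an equivalence. Two formal observations come first.

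\emph{Closure.} Goodness for fixed $G$ is closed under finite (co)limits and retracts. It also passes to limits of towers $X=\lim X_n$ for which the cofibers of $X\to X_n$ become increasingly connected, since both sides commute with such limits.

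\emph{Reformulation via Tate.} Using the genuine/Borel language, goodness for $G$ is equivalent to the assertion that the map from the $p$-completed genuine fixed points of the Borel-trivial $G$-spectrum $\mathrm{infl}(X)$ to its Borel fixed points is an equivalence. By the tom Dieck splitting, the source is $\bigoplus_{(H)} X\otimes (\bb S_p)_{hW_GH}$, suitably completed. So the statement becomes a Segal conjecture for $X$ in place of $\bb S$.

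\emph{Elementary abelian case.} Let $G=(C_p)^n$. The input is the theorem of \cite{BSlarge2024} that $X\to X^{tC_p}$ is an equivalence for bounded-below, $p$-complete, $I$-complete $X$, together with the fact that $X^{tC_p}$ is again bounded below, $p$-complete and $I$-complete. The latter should be extracted from the synthetic proof, since $H_*(X^{tC_p})$ is computed by the Singer construction on $H_*X$. Iterating, I will get an equivalence $X\to X^{tC_p}\to (X^{tC_p})^{tC_p}\to\cdots$ for iterated Tate constructions along coordinate $C_p$'s. The AGM calculation computes $\bb S^{BV}$ by exactly this: stable splittings of $BV_+$ and the identification of the Singer-construction Ext groups. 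For $X$, I plan to run the same argument with $H_*X$ as coefficients in the synthetic category. The key algebraic input is that the AGM Ext-isomorphism (the Singer construction $R_n$ being Tor-equivalent to $\bb F_p$ in the appropriate sense) tensors correctly with an $I$-complete comodule. I would first check this for comodules with trivial $\mc A$-action, where it is the finite-spectrum case tensored with a vector space, i.e.\ Yuan's direct-sum case, and then pass to $I$-complete comodules by filtering $H_*X$ by $I^k H_*X$ and using completeness to take the limit. Carrying this out inside synthetic spectra, as in \cite{BSlarge2024}, should give a filtration on $X$ whose graded pieces are sums of suspensions of $\bb F_p$-modules where the statement is known, with convergence supplied by $I$-completeness. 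I expect this step to be the main obstacle: making sure the limit over the $I$-adic filtration commutes with $(-)^{BG}$ and with $(\bb S_p)^{BG}\otimes -$, which needs uniform connectivity bounds on the synthetic filtration.

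\emph{Carlsson induction.} For general $G$, I will induct on $|G|$ following Carlsson. If $G$ is not elementary abelian, choose a central $C_p\le G$ and look at the quotient $G/C_p$ together with the family of proper subgroups. Carlsson's reduction shows it suffices to prove that $\widetilde{E\mathcal P}\wedge(-)$ of the relevant genuine fixed points vanishes $p$-adically. Here $\mathcal P$ is the family of proper subgroups, or of subgroups not containing the chosen central $C_p$. That vanishing reduces to a Tate-type statement for $G/C_p$ applied to $X^{hC_p}$ or $X^{tC_p}$, and for this the elementary abelian case and the induction hypothesis apply. The only non-formal input from $X$ is that the spectra arising, such as $X^{hC_p}\simeq(\bb S_p)^{BC_p}\otimes X$, are again bounded-below, $p$-complete and $I$-complete, so the inductive hypothesis applies to them. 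I will verify this using the formula for $(\bb S_p)^{BC_p}$ as $\bb S_p\oplus \Sigma^{\infty}BC_{p+}$-type pieces, whose homology is $I$-complete (the Singer/Lin computation) and is preserved under tensoring with $I$-complete $H_*X$. Since Carlsson's argument is otherwise formal in the category of genuine $G$-spectra, the proof will then close.
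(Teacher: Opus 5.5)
\textbf{Gap in the induction step.} Your induction relies on the claim that $X^{hC_p}\simeq(\bb S_p)^{BC_p}\otimes X$ is again $I$-complete. This is false, already for $X=\bb S_p$. Write $(\bb S_p)^{BC_p}\simeq\bb S_p\oplus Y$ with $Y=(\Sigma^\infty_+BC_p)^\wedge_p$. Then $H_*(BC_p)$ is not $I$-complete: for $p=2$, dualizing $Sq^{2^{k-1}}\cdots Sq^2Sq^1x=x^{2^k}$ shows that $e_1\in I^kH_*(\mathbb{RP}^\infty)$ for every $k$.

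The conclusion of the theorem also fails for $Y$. If $Y$ satisfied the projection formula for $C_2$, then so would $\bb S_2\oplus Y$. This would give $(\bb S_2)^{B(C_2\times C_2)}=((\bb S_2)^{BC_2})^{BC_2}\simeq(\bb S_2\oplus Y)^{\otimes 2}$, whose $H_0(-;\bb F_2)$ is $4$-dimensional. The AGM splitting $\bigoplus_{K\le C_2\times C_2}(\Sigma^\infty_+B((C_2\times C_2)/K))^\wedge_2$ instead has $5$-dimensional $H_0$.

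So you cannot feed $X^{hC_p}$ back into an inductive hypothesis about $I$-complete spectra. For the same reason, the elementary abelian case cannot be obtained by iterating the rank-one statement; the extra summands are exactly AGM's $\Sigma^{0,q_K}H_{E/K}$.

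The paper's induction keeps $X$ fixed and varies only the group. It shows that $iX$ is $\mc P$-complete in two pieces:
\begin{enumerate}
\item The free piece $EG_+\otimes iX$ is handled by Carlsson's Euler-class nilpotence plus a tower lemma: if $T_{k+d}\to T_k$ is zero on mod-$p$ homology, then $\lim_k(T_k\otimes X)\simeq 0$ for bounded-below $X$.
\item The singular piece $\widetilde EG\otimes iX$ is handled by Quigley--Shah reconstruction. There the inductive hypothesis for the Weyl groups, with the same $X$, gives a projection formula for generalized Tate functors of finite permutation spectra.
\end{enumerate}
The Borel unit is then an equivalence on proper subgroups between $\mc P$-complete objects, hence an equivalence.

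\textbf{Gap in the elementary abelian step.} Your algebraic part (trivial comodules first, then $M/I^nM$ by induction, then the $I$-adic limit) matches the paper's Proposition~\ref{prop:ea-continuity}, which rests on AGM's identification $H_E^\vee\simeq\bigoplus_{K\le E}\Sigma^{0,q_K}H_{E/K}$. The return to spectra, however, does not work as you describe.
\begin{itemize}
\item The $I$-adic filtration exists only on $H_*X$ in $\mathrm{Stable}(\mc A_*)$, not on $X$.
\item The statement is false for nonzero bounded-below $\bb F_p$-module spectra; for instance, $H\bb F_p^{BC_p}$ is not even bounded below. So there are no graded pieces ``where the statement is known.''
\item The algebra yields only $(S_E\otimes\nu X)^\wedge_\tau\simeq S_E(X)$, and $\tau$-completion does not commute with inverting $\tau$.
\end{itemize}

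The missing idea is to lift AGM's evaluation maps to synthetic maps $\bigoplus_K\Sigma^{0,q_K}\nu\bb S[B(E/K)]\to S_E$. These are built from the transfer $\bb S[BC_p]\to\bb S$: it is zero on homology, so its cofiber sequence is preserved by $\nu$ and yields a lift $\Sigma^{0,1}\nu\bb S[BC_p]\to\nu\bb S$. Tensored with $\nu X$, this gives a $C\tau$-equivalence into a $\tau$-complete target. Its source $\tau$-completes to $\nu$ of the bounded-below $p$-complete spectrum $\bigoplus_K(\bb S[B(E/K)]\otimes X)^\wedge_p$. Inverting $\tau$ then identifies this sum with $X^{BE}$, compatibly with the case $X=\bb S_p$, and two-out-of-three gives the projection formula. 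The uniform connectivity you flagged as the main obstacle is handled purely algebraically and is not where the difficulty lies.
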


\begin{rmk}
The theorem is equivalent to the assertion that the $p$-completed inflation of $X$ is Borel
complete as a genuine $G$-spectrum, for every finite $p$-group $G$. Indeed, this follows
from the classical Segal conjecture and the fact that categorical fixed points preserve colimits.
\end{rmk}

\begin{rmk}
The theorem also admits a formulation in terms of the proper Tate construction. Namely, for
$X$ as in \Cref{thm:main} and every finite $p$-group $G$, the canonical map
\[
\mathrm{can}:X\xrightarrow{\simeq}X^{\tau G}
\]
is an equivalence, where $(-)^{\tau G}$ denotes the proper Tate construction for
the trivial $G$-action. For $G=C_p$, this recovers the ordinary Tate formulation.
\end{rmk}

As an application of \Cref{thm:main}, we use the work of Carmeli--Schlank--Yanovski \cite{CSYHeight} and Yuan \cite{YuanSphere} to construct height-one semiadditive lifts of bounded-below, $p$-complete, $I$-complete spectra to the universal semiadditive category
$\tsadi_1$. In particular, for such a spectrum $X$ and every $p$-finite space $A$, we obtain
a canonical equivalence
\[
X^{\tau_{\leq 1}A}\xrightarrow{\simeq}X^A.
\]
Consequently, if $A$ is pointed and simply
connected, then $\operatorname{Map}_*(A,\Omega^\infty X)\simeq *.$ This extends the connection between height-one
semiadditivity and Lee's theorem observed by Yuan to the class of spectra considered here.

\subsection*{Acknowledgements}

The author thanks Robert Burklund for several enlightening discussions, and Shachar Carmeli for encouraging the author to extend the joint work with Burklund to all finite $p$-groups in order to construct height-one semiadditive lifts.

During the course of this work the author was supported by the Danish National Research Foundation through the Copenhagen Center for Geometry and Topology (DNRF151), European Research Council (ERC) under Horizon Europe (GeoCats, grant No. 101042990), NWO VIDI Grant (no. 223.093) and would like to thank the Max Planck Institute for Mathematics (MPIM) in Bonn for their hospitality and excellent working conditions.

\subsection*{Organisation of the paper}

In Section~\ref{sec:preliminaries}, we recall the necessary preliminaries on genuine equivariant spectra and synthetic spectra.
In Section~\ref{sec:segal}, we prove the Segal conjecture for $I$-complete spectra, first for elementary abelian $p$-groups
using the Adams--Gunawardena--Miller calculation, and then for all finite $p$-groups by adapting Carlsson's induction.
Finally, in Section~\ref{sec:semiadditive}, we use Yuan's sphere to construct height-one semiadditive lifts and deduce
applications to the cohomology of $p$-finite spaces.

\section{Preliminaries} \label{sec:preliminaries}

In this section, we recall the relevant facts about genuine
equivariant spectra and synthetic spectra used in the proofs
of our main results.

\subsection{Genuine equivariant spectra}

Let $G$ be a finite group and let $\Sp^G$ denote the stable $\infty$-category of genuine $G$-spectra.
We recall the various fixed-point functors and completion constructions used throughout the article. For a modern
treatment, we refer the reader to \cite[Section 6]{MNN17} and \cite{QS25}.

\begin{enumerate}
    \item $i:\Sp\to\Sp^G$, \textbf{the inflation}, given by $i(X)=\bb S_G\otimes X$. 
    \item For every subgroup $H \leq G$, we have a colimit preserving functor $(-)^H : \Sp^G \rightarrow \Sp$, given by $\mathrm{Map}_{\Sp_G}(\Sigma^\infty_G G/H_+, X)$, refered to as \textbf{categorical fixed points}.  
    \item Since the spectra $\Sigma^\infty_G G/H_+$, for $H\leq G$. form a set of compact generators of $\Sp^G$, the categorical fixed-point functors preserve colimits and are jointly conservative.
    \item The forgetful functor $u:\Sp^G\longrightarrow \Fun(BG,\Sp),$ which remembers the underlying spectrum with its $G$-action. For every subgroup $H\leq G$, \textbf{the homotopy fixed points} of $Y\in\Sp^G$ are defined by $Y^{hH}:=\lim_{BH}(uY).$
    \item The Borel completion functor $\beta:\Fun(BG,\Sp)\longrightarrow\Sp^G,$ which is the fully faithful right adjoint of $u$. For every $M\in\Fun(BG,\Sp)$ and $H\leq G$, we have $(\beta M)^H\simeq M^{hH}.$
    \item A genuine $G$-spectrum $Y$ is called \textbf{Borel complete} if the unit $\eta:Y\longrightarrow \beta uY$ is an equivalence.  
    \item The \textbf{geometric fixed-point functor} $\Phi^G:\Sp^G\longrightarrow\Sp$ is the colimit-preserving symmetric monoidal functor characterized by the property that $\Phi^G(\Sigma^\infty_G G/H_+)\simeq 0$ for every proper subgroup $H<G$.
    \item Let $\mc F$ be a family of subgroups of $G$, i.e., a collection closed under conjugation and passage to subgroups. Consider the commutative algebra $A_{\mc F}:= \prod_{H\in\mc F}F(G/H_+,\bb S_G).$ The $A_{\mc F}$-acyclization and $A_{\mc F}$-inversion are both smashing and are given by
\[
\Gamma_{\mc F}(Y)\simeq E\mc F_+\otimes Y,
\qquad
L_{\mc F^{-1}}(Y)\simeq
\widetilde E\mc F\otimes Y.
\]
Here $E\mc F$ is the universal $G$-space characterized by
\[
(E\mc F)^H\simeq
\begin{cases}
*, & H\in\mc F,\\
\varnothing, & H\notin\mc F,
\end{cases}
\]
and $\widetilde E\mc F$ is defined by the isotropy-separation cofiber sequence
$E\mc F_+\longrightarrow \bb S_G \longrightarrow \widetilde E\mc F.$
These identifications follow from \cite[Proposition 6.5]{MNN17}.

\item The $A_{\mc F}$-completion functor is given by $(-)^\wedge_{\mc F}:\Sp^G\longrightarrow\Sp^G,$ $Y^\wedge_{\mc F}\simeq F(E\mc F_+,Y).$
      We say that $Y$ is $\mc F$-complete if the canonical map $Y\longrightarrow Y^\wedge_{\mc F}$ is an equivalence. Equivalently, $F(\widetilde E\mc F,Y)\simeq 0.$ In particular, for the trivial family $\mc F=\{1\}$, we have $E\mc F=EG$, and the corresponding completion agrees with Borel completion:
        \[
        Y^\wedge_{\{1\}}\simeq F(EG_+,Y)\simeq\beta uY.
        \]

\item Let $\mc P$ denote the family of all proper subgroups of $G$. The \textbf{geometric fixed points} can be computed
      as the categorical fixed points of the $\mc P$-localization:
        \[
        \Phi^G(Y)\simeq
        \bigl(\widetilde E\mc P\otimes Y\bigr)^G.
        \]
    In particular, for $M\in\Fun(BG,\Sp)$, the \textbf{proper Tate construction} is defined by
        \[
        M^{\tau G}:=\Phi^G(\beta M)
        \simeq
        \bigl(\widetilde E\mc P\otimes\beta M\bigr)^G.
        \]
    When $G=C_p$, this agrees with the ordinary Tate construction $M^{tC_p}$.       

    \item Let $\bar\rho=\mathbb C[G]-\mathbb C$ denote the reduced complex regular representation of $G$. Then
        \[
        E\mc P\simeq S(\infty\bar\rho)
        :=\colim_{n}S(n\bar\rho),
        \]
        where $S(V)$ denotes the unit sphere of a representation $V$. Consequently,
                \[
                \widetilde E\mc P\simeq S^{\infty\bar\rho}
                :=\colim_{n}S^{n\bar\rho},
                \]
        where $S^V$ denotes the representation sphere associated to $V$.

     \item  For every subgroup $H\leq G$, write $W_H=N_G(H)/H$ for its Weyl group. We denote by $\Phi^H:\Sp^G\longrightarrow\Fun(BW_H,\Sp)$ the geometric $H$-fixed-point functor with the residual Weyl-group action, and by $\iota_H$ its fully faithful right adjoint.   

     \item For $K$ subconjugate to $L$, following \cite{QS25}, the \textbf{generalized Tate functor} is defined by
            \[
            \tau_K^L:=\Phi^L\iota_K:
            \Fun(BW_K,\Sp)\longrightarrow\Fun(BW_L,\Sp).
            \]
\end{enumerate}

\begin{rec}[Quigley--Shah reconstruction]\label{rec:QSreconstruction}
Let $\mc F$ be a family of subgroups of $G$, and let
$\mathfrak S_G$ denote the preorder of subgroups of $G$
under subconjugacy. For $H\notin\mc F$, let
$\mc J_H\subseteq\operatorname{sd}(\mathfrak S_G)$
be the full subcategory spanned by chains
\[
K_0<K_1<\cdots<K_n<H,
\qquad K_i\in\mc F.
\]

By \cite[Corollary 2.49]{QS25}, for every
$\mc F$-complete genuine $G$-spectrum $Y$,
there is a natural equivalence
$\Phi^H(Y)\simeq \lim_{\mc J_H} \tau_{K_n}^{H} \tau_{K_{n-1}}^{K_n} \cdots \tau_{K_0}^{K_1} \bigl(\Phi^{K_0}(Y)\bigr) $
in $\Fun(BW_H,\Sp)$.

The transition maps in this diagram are the canonical gluing transformations 
$$\tau_K^M\longrightarrow\tau_L^M\tau_K^L, \quad K<L<M,$$ 
and the units of the adjunctions $\Phi^K\dashv\iota_K$, as described in \cite[Observations 1.25--1.26]{QS25}.
\end{rec}

\subsection{Synthetic spectra}

We recall the relevant properties of $\mathbb F_p$-synthetic spectra introduced by Pstr\k{a}gowski \cite{pstrągowski2022synthetic}.
For the properties used in this article, we refer to \cite[Section 9]{burklund2022boundaries} and \cite[Section 2]{BSlarge2024}.

Let $\syn{\mathbb F_p}$ denote the category of
$\mathbb F_p$-synthetic spectra, equipped with the
synthetic analogue functor
\[ \nu:\Sp \longrightarrow \syn{\mathbb F_p}. \]

\begin{enumerate}
    \item For integers $s,w$, the bigraded synthetic spheres are defined by $\bb S^{s,w}:=\Sigma^{s-w}\nu(\bb S^w).$ There is a canonical map
    $\tau:\bb S^{0,-1}\longrightarrow\bb S^{0,0},$
    whose cofiber we denote by $C\tau$.

    \item Inverting $\tau$ recovers the category of spectra:
    \[
        \syn{\mathbb F_p}[\tau^{-1}]\simeq\Sp,
        \quad
        (\nu X)[\tau^{-1}]\simeq X.
    \]

    \item The object $C\tau$ admits a commutative algebra structure, and there is a fully faithful symmetric
    monoidal functor
    \[
        \mathrm{Mod}_{C\tau}(\syn{\mathbb F_p})
        \longrightarrow
        \mathrm{Stable}(\mathcal A_*),
    \]
    where $\mathcal A_*$ is the dual Steenrod algebra and $\mathrm{Stable}(\mathcal A_*)$ denotes Hovey's
    stable category of comodules. Under this functor, $C\tau\otimes\nu X$ corresponds to $H_*(X;\mathbb F_p)$.
    
    \item A synthetic spectrum $Z$ is called $\tau$-complete if the canonical map
        \[
        Z\longrightarrow \lim_n (Z\otimes C\tau^n)
        \]
    is an equivalence, where $C\tau^n$ denotes the cofiber of multiplication by $\tau^n$. The subcategory of $\tau$-complete synthetic spectra
    is closed under limits, and tensoring with $C\tau$ is conservative on this subcategory.

    If $X$ is a bounded-below $p$-complete spectrum, then $\nu X$ is $\tau$-complete \cite[Remark 2.6]{BSlarge2024}. Moreover, for every bounded-below spectrum $X$, there is a natural equivalence
    \[
    (\nu X)^\wedge_\tau\simeq\nu(X^\wedge_p).
    \]

\item For any diagram of spectra $F:J\rightarrow\Sp$, the canonical coassembly map
        \[
        \left(\lim_{j\in J}\nu F(j)\right)[\tau^{-1}]
        \longrightarrow
        \lim_{j\in J}F(j)
        \]
    is an equivalence \cite[Lemma 2.7]{BSlarge2024}.

\item The synthetic analogue functor $\nu$ is symmetric monoidal \cite[Remark 9.5]{burklund2022boundaries}.

\end{enumerate}

We conclude this subsection with a continuity result for limits in the stable category of Steenrod comodules,
which will be used in the algebraic calculation of Section~\ref{sec:ea-comparison}.

\begin{lem}\label{lem:ea-limits}
Let $(M_n)$ be a tower of graded $\mc A_*$-comodules with degreewise surjective 
transition maps. Suppose there exists an integer $b$ such that 
$(M_n)_d=0$ for all $d<b$ and all $n$. Let $M=\varprojlim_n M_n$ be the degreewise limit. Then:

\begin{enumerate}
    \item $M$ has its natural comodule structure and represents the limit of this tower in $\mathrm{Stable}(\mc A_*)$.
    \item If $N$ is a bounded-below finite-type comodule then the canonical map 
            $$M\otimes N\longrightarrow\lim_n(M_n\otimes N)$$
          is an equivalence in $\mathrm{Stable}(\mc A_*)$.
\end{enumerate}

\end{lem}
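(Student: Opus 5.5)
The plan is to reduce both statements to the behaviour of $\lim$ and $\lim^1$ on towers in Hovey's stable category $\mathrm{Stable}(\mc A_*)$. Limits there are computed by taking a Milnor-type fiber sequence from the product,
\[
\lim_n M_n \longrightarrow \prod_n M_n \xrightarrow{\;1-\mathrm{sh}\;} \prod_n M_n .
\]
So the plan needs two things. First, products of uniformly bounded-below comodules are the degreewise products. Second, the map $1-\mathrm{sh}$ is surjective in the abelian category of comodules and has kernel $M$.

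\textbf{Step 1: products.} Fix the uniform bound $b$. For graded comodules concentrated in degrees $\ge b$, the degreewise product $\prod_n M_n$ carries a comodule structure. The reason is that $\mc A_*$ is of finite type and connective, so in each degree the coaction
\[
\psi:(\prod M_n)_d\to\bigoplus_{i+j=d}(\mc A_*)_i\otimes(\prod M_n)_j
\]
involves only finitely many pairs $(i,j)$, with $j\ge b$. The product of the coactions therefore lands in a finite sum, which commutes with products. One checks this is the categorical product in comodules. For the stable category, the key point is that products of such uniformly bounded-below comodules are already derived products. Concretely, one resolves each $M_n$ by uniformly bounded-below injectives of the form $\mc A_*\otimes V$, and in each degree the product of these is again injective (or at least relatively injective) of the same form.

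\textbf{Step 2: the limit and part (1).} The degreewise-surjective tower gives, degree by degree, a short exact sequence
\[
0\to M_d\to\prod_n (M_n)_d\to\prod_n (M_n)_d\to 0 .
\]
Here $\lim^1$ vanishes by Mittag-Leffler. This is therefore a short exact sequence of comodules, and in $\mathrm{Stable}(\mc A_*)$ it yields a fiber sequence. Combined with Step 1, this identifies $M$ with the homotopy limit of the tower, which proves part (1).

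\textbf{Step 3: part (2).} Tensoring over $\mathbb F_p$ with the comodule $N$ is exact. Hence $(M_n\otimes N)$ is again a tower with surjective transition maps, and it is uniformly bounded below, by $b+b'$ where $b'$ is the bound for $N$. Part (1) then says $\lim_n(M_n\otimes N)$ is the degreewise limit. It remains to check that
\[
M\otimes N\to \varprojlim(M_n\otimes N)
\]
is a degreewise isomorphism. In degree $d$ we have
\[
(M\otimes N)_d=\bigoplus_{j\ge b'} M_{d-j}\otimes N_j .
\]
This is a finite sum, since $d-j\ge b$ forces $j\le d-b$. Each $N_j$ is finite dimensional, so $-\otimes N_j$ commutes with inverse limits, and a finite sum commutes with limits. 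Finally, the tensor product in the stable category of comodules agrees with the underlying tensor product, because $\otimes$ over a field is exact.

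\textbf{Main obstacle.} I expect the delicate point to be Step 1: showing that products in $\mathrm{Stable}(\mc A_*)$ of uniformly bounded-below comodules are computed naively. Hovey's category is not the derived category, and infinite products of injectives need not remain injective without finiteness. My first attempt would use the uniform bound together with the finite type of $\mc A_*$ to see that $\prod_n(\mc A_*\otimes V_n)\cong\mc A_*\otimes\prod_n V_n$ degreewise. Alternatively, I would invoke the description of bounded-below objects of $\mathrm{Stable}(\mc A_*)$ as $\mathrm{Mod}_{C\tau}$ in synthetic spectra, where limits are computed by $\nu$ of bounded-below objects.
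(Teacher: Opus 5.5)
Your argument is correct, but it is organized differently from the paper's. The paper never uses products. It puts the comodule structure on $M$ directly and compares $M$ with $L=\lim_n M_n$ via the forgetful functor $U$ to graded $D(\bb F_p)$; $U$ is conservative on coconnective objects by Pstr\k{a}gowski's identification of the coconnective parts of $D(\mathrm{Comod}_{\mc A_*})$ and $\mathrm{Stable}(\mc A_*)$. It then quotes \cite[Corollary~3.5]{BSlarge2024} to compute $U(L)$ as a derived limit, which Mittag-Leffler collapses to $U(M)$. Part (2) is taken directly from \cite[Lemma~3.4]{BSlarge2024}.

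You instead use the Milnor sequence and prove by hand that uniformly bounded-below products are naive. In effect this proves the special case of \cite[Corollary~3.5]{BSlarge2024} that is needed, and the comodule structure on $M$ then comes for free as $\ker(1-\mathrm{sh})$. For part (2) you avoid \cite[Lemma~3.4]{BSlarge2024} by applying part (1) to the surjective tower $(M_n\otimes N)$ and counting degrees. This is more elementary and shows exactly where the finite type of $N$ enters, but it is specific to towers. The cited lemma handles arbitrary uniformly bounded-below diagrams, which the paper needs in Corollary~\ref{corr:ea-finite}.

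To make your Step~1 airtight:
\begin{enumerate}
\item No hedging about ``relatively injective'' is needed. Since $\mc A_*\otimes-$ is right adjoint to the forgetful functor, $\prod_n(\mc A_*\otimes V_n)\cong\mc A_*\otimes\prod_n V_n$ is cofree, hence injective. Under the uniform bound it equals the degreewise product.
\item The cobar resolutions are uniformly bounded below and degreewise products are exact, so $\prod_n I_n^\bullet$ is an injective resolution of $\prod_n M_n$.
\item You must still justify that products of heart objects in $\mathrm{Stable}(\mc A_*)$ agree with those in $D(\mathrm{Comod}_{\mc A_*})$. This follows from the same Pstr\k{a}gowski input the paper uses, because coconnective parts are closed under limits in both categories. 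It is also immediate in the model of $\mathrm{Stable}(\mc A_*)$ by complexes of injective comodules.
\end{enumerate}
The synthetic alternative you mention is not needed.
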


\begin{proof}

Let $k = \bb F_p$. $U:\mathrm{Stable}(\mc A_*) \to D(k)^{\mathrm{gr}}$ be the
forgetful functor, and write $M=\varprojlim_n^{\mathrm{gr}} M_n $ for degreewise limits in graded vector space.

First, $M$ has a canonical comodule structure. Indeed, since $\mc A_*$ is nonnegatively graded and
degreewise finite-dimensional, and all $M_n$ vanish below the same internal degree $b$, the canonical map
\[
M\otimes_k\mc A_* \longrightarrow
\varprojlim_n^{\mathrm{gr}}(M_n\otimes_k\mc A_*)
\]
is an isomorphism: in internal degree $d$, only the finitely many indices $b\le i\le d$ contribute, and
each $(\mc A_*)_{d-i}$ is finite-dimensional. Taking limits of the coactions therefore defines a
coaction on $M$. The same argument with $\mc A_* \otimes_k \mc A_*$ verifies coassociativity;
the counit identity follows likewise. The projections $M\to M_n$ are comodule maps and
induce a canonical comparison $\eta:M\longrightarrow L:=\lim_n M_n.$

By \cite[Corollary~3.8]{PstragowskiSynthetic}, the natural embedding $D(\mathrm{Comod}_{\mc A_*})\longrightarrow\mathrm{Stable}(\mc A_*)$ 
identifies the coconnective subcategories and their hearts. On these subcategories, $U$ agrees with the
forgetful functor, which is conservative. Both $M$ and $L$ are coconnective: $M$ belongs to
the heart, and $L$ is a limit of heart objects.

By \cite[Corollary~3.5]{BSlarge2024},
the uniform internal lower bound gives $U(L)\simeq
\varprojlim_n^{\mathrm{gr}}U(M_n).$
Since the transition maps are degreewise surjective, the derived inverse limit of the underlying graded
vector spaces agrees with their degreewise inverse limit $U(M)$. Thus conservativity on coconnective objects gives
$M\xrightarrow{\simeq}L.$ If $N$ is a bounded-below finite-type heart comodule, \cite[Lemma~3.4]{BSlarge2024} gives
$M\otimes N\simeq\left(\lim_n M_n\right)\otimes N \xrightarrow{\simeq} \lim_n(M_n\otimes N).$

\end{proof}

\section{Segal conjecture for large objects} \label{sec:segal}
\subsection{Elementary abelian groups} \label{sec:ea-comparison}

Let $E$ be an elementary abelian $p$-group, and write $\bb S_p:=\bb S^\wedge_p$ for the $p$-complete sphere.
Our goal in this section is to prove that, for every bounded-below $p$-complete spectrum $X$ whose
mod-$p$ homology is $I$-complete, the canonical projection map
\[
(\bb S_p)^{BE}\otimes X
\longrightarrow X^{BE}
\]
is an equivalence. Here the tensor product is taken in the category of $p$-complete spectra.

\begin{rmk}
    For $E=C_p$, this follows from \cite[Theorem 3.3]{BSlarge2024} and the isotropy-separation square.
\end{rmk}

Let $E$ be an elementary abelian $p$-group, and write $H_E:=H_*(BE;\mathbb F_p)$.
We regard $H_E$ as an object of $\mathrm{Stable}(\mc A_*)$. Choose a cellular filtration of $BE$ with finite skeleta
and set
\[
X_j=\Sigma^\infty_+BE^{(j)}.
\]
Then $X_j$ is a finite spectrum, and we denote its
Spanier--Whitehead dual by $DX_j$.

We consider the internal dual of $H_E$,
\begin{equation}
H_E^\vee :=\underline{\mathrm{Hom}}_{\mathrm{Stable}(\mc A_*)} (H_E,\bb F_p)
\simeq \lim_j H_*(DX_j)
\simeq \lim_j DH_*(X_j).
\label{eq:ea-borel-object}
\end{equation}

The following proposition reformulates the algebraic mapping calculation of Adams--Gunawardena--Miller \cite{AGM}
in the stable category $\mathrm{Stable}(\mc A_*)$. For every subgroup $K\leq E$, write $q_K:=\dim_{\bb F_p}(E/K).$

\begin{prop}[{\cite[Theorem~1.6]{AGM}}]\label{prop1.6AGM}
The evaluation maps constructed in \cite[\S~9--10]{AGM} induce an equivalence
\begin{equation}
    \bigoplus_{K\leq E}\Sigma^{0,q_K}H_{E/K} \xrightarrow{\simeq} H_E^\vee
\end{equation}
in $\mathrm{Stable}(\mc A_*)$.
\end{prop}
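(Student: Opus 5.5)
The proposition is a translation of AGM's Theorem 1.6 into $\mathrm{Stable}(\mc A_*)$, so the plan is to reduce it to AGM's Ext calculation by three moves: realise the maps, reduce the equivalence to a statement about Ext, and then invoke AGM.

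\emph{Realising the maps.} AGM construct, for each $K\le E$, an evaluation map of degree $q_K$ from $H_{E/K}$ to the continuous dual $\varinjlim_j H^*(X_j)$, built from the transfer and projection $BE\to B(E/K)$. Dually, these are compatible families of maps $\Sigma^{0,q_K}H_{E/K}\to DH_*(X_j)$. By \eqref{eq:ea-borel-object}, $H_E^\vee\simeq\lim_j DH_*(X_j)$, so the families assemble into a map $\Sigma^{0,q_K}H_{E/K}\to H_E^\vee$. Summing over the finitely many subgroups $K$ gives the comparison map $\phi$.

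\emph{Reducing to Ext.} Every object involved is bounded below and of finite type. To see that $\phi$ is an equivalence, I would test it against maps out of the compact generators of $\mathrm{Stable}(\mc A_*)$, namely finite comodules; by thick-subcategory closure it suffices to test against $\bb F_p$ in all bidegrees. The mapping spaces $\mathrm{Map}(\bb F_p,-)$ compute $\mathrm{Ext}_{\mc A_*}$, and the limit description of $H_E^\vee$ gives
\[
\pi_{*}\mathrm{Map}(\bb F_p,H_E^\vee)\cong \mathrm{Ext}_{\mc A_*}(H_E,\bb F_p).
\]
The $\lim^1$ term vanishes because the groups are finite in each bidegree (Mittag-Leffler). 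Also, $\mathrm{Ext}(\bb F_p,H_{E/K})$ is computed directly by the Adams $E_2$-page for $B(E/K)_+$.

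\emph{Applying AGM.} AGM Theorem 1.6 states exactly that the evaluation maps induce an isomorphism from $\bigoplus_K \mathrm{Ext}^{s,t}(\bb F_p,H_*(B(E/K)_+))$, suitably shifted by $q_K$, to $\mathrm{Ext}^{s,t}(H^*BE\text{-continuous},\bb F_p)$. This identifies the two Ext computations.

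\emph{Main obstacle.} The hard part is the bookkeeping. I would need to check that AGM's Ext groups, which they take over the Steenrod algebra using the continuous/inverse-limit module structure on $H^*BE$, coincide with the mapping spectra in Hovey's stable category rather than in $D(\mathrm{Comod})$. The two differ only on non-coconnective or unbounded objects. Lemma~\ref{lem:ea-limits}, together with the uniform lower bound, lets me work in the coconnective part, where the categories agree, as in its proof. Finally, the bigrading shift $\Sigma^{0,q_K}$ has to be matched with AGM's internal degree convention.
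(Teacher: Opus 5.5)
Your proposal is correct and is essentially the paper's argument. The paper simply invokes \cite[Theorem~1.6]{AGM} after identifying the irreducible $(E,1)$-sets with the $E/K$. Your reduction is exactly the translation the paper leaves implicit: equivalences in $\mathrm{Stable}(\mc A_*)$ are detected by maps out of the bigraded shifts of $\bb F_p$, and adjunction gives $[\bb F_p,H_E^\vee]\cong[H_E,\bb F_p]$, which is the Ext group AGM compute.

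Two small corrections. First, $H_E^\vee$ is the inverse limit $\lim_j DH_*(X_j)$, not a $\varinjlim$. Second, Lemma~\ref{lem:ea-limits} is the wrong tool for the comparison between $\mathrm{Stable}(\mc A_*)$ and $D(\mathrm{Comod})$: the tower $DH_*(X_j)$ has no uniform internal lower bound, and its limit is not even in the heart. No such tool is needed, though, because after adjunction both arguments $H_E$ and $\bb F_p$ lie in the heart.
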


\begin{rmk}
For $U=E$ and $V=1$, the irreducible $(E,1)$-sets appearing in \cite[Proposition~9.1]{AGM} are precisely
the transitive $E$-sets $E/K$, whose automorphism groups are naturally isomorphic to $E/K$.
The corresponding evaluation maps are constructed in \cite[\S\S~9--10]{AGM}, and \cite[Theorem~1.6]{AGM} identifies their effect on Ext groups. 
\end{rmk}

\begin{corr}\label{corr:ea-finite}
The object $H_E^\vee$ is a finite direct sum of bigraded shifts of bounded-below finite-type comodules.
Consequently, tensoring with $H_E^\vee$ commutes with limits of uniformly internally bounded-below diagrams
of comodules in $\mathrm{Stable}(\mc A_*)^\heartsuit$.
\end{corr}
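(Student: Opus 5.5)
The first claim follows directly from \Cref{prop1.6AGM}, and the second from the first combined with \Cref{lem:ea-limits}(2) and exactness of the tensor product in each variable.

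\textbf{Step 1: finiteness of $H_E^\vee$.} By \Cref{prop1.6AGM}, $H_E^\vee$ is equivalent to $\bigoplus_{K\leq E}\Sigma^{0,q_K}H_{E/K}$. The sum is finite because an elementary abelian $p$-group has finitely many subgroups. Each summand is a bigraded shift of $H_{E/K}=H_*(B(E/K);\bb F_p)$, an honest comodule in the heart. This comodule is concentrated in nonnegative internal degrees and is degreewise finite-dimensional, since $B(E/K)$ has a CW structure with finitely many cells in each dimension. So each $H_{E/K}$ is a bounded-below finite-type comodule, which proves the first assertion.

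\textbf{Step 2: reduction to a single summand.} Let $(M_n)$ be a diagram of heart comodules, uniformly bounded below in internal degree. In the intended applications this is a tower with surjective transitions, as in \Cref{lem:ea-limits}, for instance the tower $H_*(X)/I^nH_*(X)$. Tensoring in $\mathrm{Stable}(\mc A_*)$ is exact in each variable and commutes with bigraded shifts, since $\Sigma^{0,q}$ is tensoring with an invertible object. Finite direct sums are finite limits, so they commute with $\lim_n$. It therefore suffices to show that
\[
\Bigl(\lim_n M_n\Bigr)\otimes H_{E/K}\longrightarrow \lim_n\bigl(M_n\otimes H_{E/K}\bigr)
\]
is an equivalence for each $K$.

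\textbf{Step 3: the single-summand case.} This is exactly \Cref{lem:ea-limits}(2) with $N=H_{E/K}$. Part (1) of that lemma identifies $\lim_n M_n$ with the degreewise limit $M$, and part (2) then gives the equivalence above.

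\textbf{Main obstacle.} The only delicate point is the scope of "uniformly internally bounded-below diagrams." For towers with surjective transitions, \Cref{lem:ea-limits} applies verbatim. For general towers, or other uniformly bounded-below diagrams, I would instead argue degreewise. A bounded-below finite-type $N$ makes $M\otimes N$ in each internal degree a finite sum of terms $M_{d-i}\otimes N_i$, so the tensor product commutes with the limit at the level of underlying graded objects. Then, as in the proof of \Cref{lem:ea-limits}, I would use \cite[Corollary~3.5]{BSlarge2024} and conservativity of the forgetful functor on coconnective objects to lift this to $\mathrm{Stable}(\mc A_*)$. I would state the corollary's "consequently" clause for the towers actually used later, so that this extension is not needed.
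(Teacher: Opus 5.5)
Your argument is essentially the paper's: split $H_E^\vee$ via Proposition~\ref{prop1.6AGM} into finitely many bigraded shifts of the bounded-below finite-type comodules $H_{E/K}$, then invoke a limit-commutation result for finite-type comodules. The paper cites \cite[Lemma~3.4]{BSlarge2024} directly, which is the input behind Lemma~\ref{lem:ea-limits}(2) but has no tower or surjectivity hypothesis, so your ``main obstacle'' never arises. Do not fall back on restricting the clause to surjective towers, though: Proposition~\ref{prop:ea-continuity} also applies this corollary to products of copies of $\bb F_p$, to pass to arbitrary comodules with trivial coaction.
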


\begin{proof}
By Proposition~\ref{prop1.6AGM}, we have  $H_E^\vee\simeq \bigoplus_{K\leq E}\Sigma^{0,q_K}H_{E/K}.$
Each $H_{E/K}$ is bounded below and finite-dimensional in every internal degree. The assertion follows from
\cite[Lemma~3.4]{BSlarge2024}.
\end{proof}

\begin{prop}\label{prop:ea-continuity}
Let $M$ be a bounded-below $I$-complete $\mc A_*$-comodule. Then the canonical map
$M\otimes H_E^\vee \longrightarrow \lim_j\left(M\otimes H_*(DX_j)\right)$
is an equivalence in $\mathrm{Stable}(\mc A_*)$.
\end{prop}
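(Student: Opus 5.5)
Since $M$ is $I$-complete and bounded below, I will write $M\simeq\lim_n M/I^nM$. The transition maps $M/I^{n+1}M\to M/I^nM$ are surjective, and all terms vanish below the lower bound $b$ of $M$. By \Cref{lem:ea-limits}(1), this degreewise limit is also the limit in $\mathrm{Stable}(\mc A_*)$. The plan is to reduce the claim to the case where the Steenrod action is trivial, namely the associated graded pieces $I^nM/I^{n+1}M$. For such pieces the statement is the defining property of $H_E^\vee$.

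\textbf{Step 1 (trivial comodules).} Suppose first that $N$ is a bounded-below comodule with trivial coaction, hence a direct sum of shifted copies of $\bb F_p$, possibly infinitely many in a given degree. The equivalence \eqref{eq:ea-borel-object}, $H_E^\vee\simeq\lim_j H_*(DX_j)$, handles one copy of $\bb F_p$, and shifts are harmless. To pass to the sum, I would filter $N$ by internal degree and use that in each fixed internal degree only finitely many of the $H_*(DX_j)$-contributions matter. The point is that $H_*(DX_j)$ lives in degrees $[-j,0]$ and $N$ is bounded below. Equivalently, I would show that $\bigoplus_\alpha\lim_j$ commutes with $\lim_j\bigoplus_\alpha$ by checking it on underlying graded vector spaces degreewise; the coconnectivity of these objects makes this check sufficient, using the conservativity invoked in \Cref{lem:ea-limits}. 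Extensions then give the result for all finite-length $N$ with trivial associated graded, in particular for each $M/I^nM$: this is a finite iterated extension of the pieces $I^kM/I^{k+1}M$, which carry trivial action. The statement is exact in $N$, since both sides are.

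\textbf{Step 2 (passage to the limit).} The left side is $M\otimes H_E^\vee\simeq\lim_n (M/I^nM)\otimes H_E^\vee$, by \Cref{corr:ea-finite} applied to the uniformly bounded-below tower $M/I^nM$. By Step 1 this equals $\lim_n\lim_j (M/I^nM)\otimes H_*(DX_j)$. Exchanging the two limits, it remains to show, for each fixed $j$, that
\[
M\otimes H_*(DX_j)\simeq\lim_n (M/I^nM)\otimes H_*(DX_j).
\]
This follows from \Cref{lem:ea-limits}(2) applied with $N=H_*(DX_j)$, which is finite, hence bounded below and of finite type. Composing these identifications gives the canonical map in the statement, and naturality of all the comparison maps ensures it is the map claimed.

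\textbf{Main obstacle.} I expect the main difficulty to be Step 1 for infinite sums of trivial comodules: $H_E^\vee$ is not compact, so tensoring with it need not commute with infinite direct sums. The way around this is the degreewise finiteness coming from the skeleta of $BE$ together with the uniform lower bound. If that becomes delicate, I would instead prove Step 1 by running the same tower argument on the $j$-side, using \Cref{corr:ea-finite} to commute $-\otimes H_E^\vee$ with the relevant limits.
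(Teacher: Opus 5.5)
Your overall architecture matches the paper's: the trivial-coaction case, extensions up to $M/I^nM$, then $\lim_n$ via \Cref{corr:ea-finite} and exchanging $\lim_n$ with $\lim_j$. Step~2 is fine. The gap is the infinite-sum case in Step~1.

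There you propose to check that $\bigoplus_\alpha\lim_j$ agrees with $\lim_j\bigoplus_\alpha$ for $D_j=H_*(DX_j)$ on underlying graded vector spaces, degreewise. Conservativity of the forgetful functor $U$ on coconnective objects is fine. The problem is that you cannot compute $U$ of a limit over $j$ degreewise. \cite[Corollary~3.5]{BSlarge2024}, as used in \Cref{lem:ea-limits}, lets $U$ commute with limits only under a uniform internal lower bound, and the lower bound $-j$ of $D_j$ drifts to $-\infty$.

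This already fails for $N=\bb F_p$. The degreewise limit $\lim_j U(D_j)$ is $H^{-*}(BE)$, with $H^d(BE)$ in internal degree $-d$, so it is nonzero in negative internal degrees. By \Cref{prop1.6AGM}, however, $U(H_E^\vee)\simeq\bigoplus_K U(\Sigma^{0,q_K}H_{E/K})$ vanishes in negative internal degrees. That discrepancy is exactly the algebraic Segal phenomenon, so no degreewise bookkeeping can detect $H_E^\vee$.

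Your heuristic is also wrong on its own terms. In a fixed internal degree $m$, $N\otimes D_j$ receives contributions from $N$ in degrees $m,\dots,m+j$. That window grows with $j$ because $N$ is only bounded below, so even the naive degreewise limit produces a product over degrees, not a sum.

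Finally, the obstacle is misplaced. $-\otimes H_E^\vee$ preserves all colimits whether or not $H_E^\vee$ is compact. The side that does not evidently commute with infinite sums is $\lim_j(-\otimes D_j)$.

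The missing idea is to trade sums for products. Both sides of $\eta_N\colon N\otimes H_E^\vee\to\lim_j(N\otimes D_j)$ commute with uniformly bounded-below products of heart comodules:
\begin{itemize}
\item the target does, because each $D_j$ is dualizable;
\item the source does by \Cref{corr:ea-finite}, i.e.\ the AGM splitting.
\end{itemize}
Since the coaction on $N$ is trivial, each degree-$d$ part $N_d$ is a retract, via comodule maps, of a product of copies of $\bb F_p$ in degree $d$. The uniform lower bound also gives $\bigoplus_d N_d\simeq\prod_d N_d$ in $\mathrm{Stable}(\mc A_*)$. Hence $N$ is a retract of a uniformly bounded-below product of internally shifted copies of $\bb F_p$, and $\eta_N$ is a retract of a product of the equivalences $\eta_{\bb F_p[d]}$. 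Your fallback sentence names \Cref{corr:ea-finite}, which is the right ingredient. Without this retract-of-products reduction, though, it does not yet give an argument.
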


\begin{proof}
Put $D_j=H_*(DX_j)$, and for every $\mc A_*$-comodule $N$, consider the canonical map
\[
\eta_N:N\otimes H_E^\vee
\longrightarrow \lim_j(N\otimes D_j).
\]
Both sides are exact in $N$. By Corollary~\ref{corr:ea-finite}, the source commutes
with limits of uniformly internally bounded-below diagrams of heart comodules. Since each $D_j$ is
dualizable, the target commutes with all limits.

As in \cite[proof of Theorem~3.3]{BSlarge2024}, the comparison is an equivalence for bounded-below comodules with trivial Steenrod coaction.
Indeed, every vector space is a retract of a product of copies of $\bb F_p$, and the corresponding splittings are comodule maps when the coaction is trivial. Moreover, sums over distinct internal degrees agree with products in $\mathrm{Stable}(\mc A_*)$
in this bounded-below situation. Since both sides of $\eta$ preserve these products and retracts, the comparison is an equivalence for all such comodules.

Now put $M_n=M/I^nM, K_n=I^nM/I^{n+1}M.$ We have short exact sequences
\[
0\longrightarrow K_n
\longrightarrow M_{n+1}
\longrightarrow M_n
\longrightarrow 0,
\]
which give fiber sequences in $\mathrm{Stable}(\mc A_*)$.
Since $IK_n=0$, each $K_n$ has trivial Steenrod coaction. Thus $\eta_{K_n}$ is an
equivalence, and by exactness and induction, so is $\eta_{M_n}$ for every $n$.

Finally, the tower $(M_n)$ is uniformly internally bounded below and has degreewise
surjective transition maps. By Lemma~\ref{lem:ea-limits} and the $I$-completeness of $M$, we have $M\simeq\lim_n M_n.$ Using Corollary~\ref{corr:ea-finite} and the dualizability of each $D_j$, we obtain
\[
\begin{aligned}
M\otimes H_E^\vee
&\simeq \lim_n(M_n\otimes H_E^\vee) \xrightarrow{\simeq} \lim_n\lim_j(M_n\otimes D_j) \simeq \lim_j(M\otimes D_j).
\end{aligned}
\]
\end{proof}

The preceding algebraic calculation provides the key input for proving the elementary abelian Borel projection formula.
We now pass to synthetic spectra, where we lift the AGM evaluation maps and use $\tau$-completeness to obtain
the desired equivalence of spectra.

For a $p$-complete spectrum $Y$, define
$S_E(Y):=\lim_j\nu(Y\otimes DX_j), S_E:=\lim_j\nu(DX_j).$
Here the tensor products inside $\nu$ are taken in ordinary
spectra. By the symmetric monoidality of $\nu$, there is a
canonical comparison map
\[
\alpha_Y:S_E\otimes\nu Y\longrightarrow S_E(Y).
\]

\begin{rmk}
The objects $S_E$ and $S_E(Y)$ are synthetic lifts of $\bb S^{BE}$ and $Y^{BE}$, respectively. Indeed, 
by \cite[Lemma~2.7]{BSlarge2024}, inverting $\tau$ gives
$S_E[\tau^{-1}]\simeq\bb S^{BE}, S_E(Y)[\tau^{-1}]\simeq Y^{BE}.$
Moreover, reduction of $S_E$ modulo $\tau$ recovers the algebraic object $H_E^\vee$.
\end{rmk}

Suppose that $Y$ is bounded below and $p$-complete, with $I$-complete mod-$p$ homology.
The spectrum $Y\otimes DX_j$ is bounded below and $p$-complete.
Consequently, $\nu(Y\otimes DX_j)$ is $\tau$-complete, since $\tau$-complete objects are closed under limits, $S_E(Y)$ is also $\tau$-complete.

Since $C\tau$ is dualizable, the reduction of $\alpha_Y$ modulo $\tau$ identifies with
\[
H_E^\vee\otimes H_*(Y)
\longrightarrow
\lim_j\bigl(H_*(Y)\otimes H_*(DX_j)\bigr).
\]
By Proposition~\ref{prop:ea-continuity}, this is an equivalence. Thus $\alpha_Y$ is a $C\tau$-equivalence,
and since its target is $\tau$-complete, we obtain
\begin{equation}
\bigl(S_E\otimes\nu Y\bigr)^\wedge_\tau \xrightarrow{\simeq} S_E(Y).
\label{eq:ea-synthetic-comparison}
\end{equation}

\begin{rmk}
The preceding equivalence holds only after $\tau$-completion and therefore does not immediately imply the desired
equivalence of spectra, since $\tau$-completion need not commute with $\tau$-inversion. To overcome this, we construct
synthetic lifts of the AGM evaluation maps whose $\tau$-inversions recover the corresponding topological transfer maps.
\end{rmk}

\begin{lem}[Synthetic lifts of the AGM evaluation maps]
\label{lem:synthetic-agm}
There is a map
\[
d_E:
\bigoplus_{K\le E}
\Sigma^{0,q_K}\nu\bb S[B(E/K)]
\longrightarrow S_E
\]
whose reduction modulo $\tau$ is the equivalence of Proposition~\ref{prop1.6AGM}.
Moreover, its $K$-th component realizes to the adjoint of the transfer evaluation
$\bb S[B(E/K)]\otimes\bb S[BE] \longrightarrow\bb S$
associated to the transitive $E$-set $E/K$.
\end{lem}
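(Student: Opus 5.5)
Since $S_E=\lim_j\nu(DX_j)$ and $\nu$ preserves the relevant structure, I will build $d_E$ one summand $K\le E$ at a time and then sum. The $K$-th component should be a synthetic map $\Sigma^{0,q_K}\nu\bb S[B(E/K)]\to S_E$. By the universal property of the limit, it is enough to give compatible maps $\Sigma^{0,q_K}\nu\bb S[B(E/K)]\to\nu(DX_j)$ for all $j$. Equivalently, by duality and symmetric monoidality of $\nu$, I need compatible maps
\[
\Sigma^{0,q_K}\nu\bigl(\bb S[B(E/K)]\otimes X_j\bigr)\to\nu\bb S .
\]
The topological input is the transfer evaluation $t_K:\bb S[B(E/K)]\otimes\bb S[BE]\to\bb S$ attached to the transitive $E$-set $E/K$, restricted to the skeleta $X_j$.

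The point is to lift each $t_K$ to a synthetic map of filtration shift $q_K$. That means factoring $\nu(t_K)$ through $\tau^{q_K}$, equivalently producing a map out of $\Sigma^{0,q_K}$ whose $\tau$-inversion is $t_K$. I would use the standard criterion for synthetic lifts: maps $\Sigma^{0,q}\nu A\to\nu B$ lifting a given $f:A\to B$ correspond to $f$ having Adams filtration at least $q$, together with a choice of how it is detected. Here I would work with the bounded-below finite spectra $\bb S[B(E/K)]^{(m)}\otimes X_j$ and pass to the limit. AGM's \S\S 9--10 show that the evaluation associated to $E/K$ is detected in Adams filtration exactly $q_K=\dim(E/K)$. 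It is the class in $\mathrm{Ext}^{q_K}$ given by the Yoneda/cup product with the top Dickson-type class (the Euler class of $E/K$ in $\mathrm{Ext}$, via the Lannes--Zarati/Singer description). Hence $t_K$ has Adams filtration $\ge q_K$, and I can choose lifts compatibly in $j$ and $m$. To handle the compatibility I would pick the lift once on the colimit-level object, using that $\nu$ commutes with the filtered colimit $\bb S[B(E/K)]=\colim_m\Sigma^\infty_+B(E/K)^{(m)}$ ($\nu$ preserves filtered colimits). I would then restrict along $X_j\to\Sigma^\infty_+BE$, so no $\lim^1$ issue arises in choosing the lifts.

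Next I check the mod-$\tau$ reduction. Tensoring a filtration-$q_K$ lift with $C\tau$ gives a map $\Sigma^{0,q_K}H_{E/K}\to H_*(DX_j)$ in $\mathrm{Stable}(\mc A_*)$, namely an $\mathrm{Ext}^{q_K}$ class. By the synthetic Adams spectral sequence dictionary, this class is the one detecting $t_K$. Summing over $K$ and passing to the limit over $j$ (using $C\tau$ dualizable, so $C\tau\otimes S_E\simeq\lim_j H_*(DX_j)=H_E^\vee$) recovers exactly the evaluation map of Proposition~\ref{prop1.6AGM}. The realization statement is then immediate: $\tau$-inversion of the $K$-th component is, by construction and by \cite[Lemma~2.7]{BSlarge2024}, the adjoint of $t_K$.

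The main obstacle is the filtration claim: that $t_K$ has Adams filtration $\ge q_K$ and is detected by precisely the AGM algebraic evaluation class, rather than merely some class in that filtration. I would first try to extract this directly from AGM \S 10, where their geometric evaluation maps are shown to induce the algebraic ones on $E_2$. If that is not formulated sharply enough, I would instead argue by factoring $t_K$ as restriction to $K$, followed by the $q_K$-fold composite of $C_p$-transfers along a flag $K<\cdots<E$. Each $C_p$-transfer $\Sigma^\infty_+BC_p\to\bb S$ has Adams filtration one, detected by $h_0$-type classes via Lin's theorem. Multiplicativity of synthetic lifts then gives the $\tau^{q_K}$-divisibility, with the correct detecting class.
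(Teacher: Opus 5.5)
Your outline is correct, and the fallback in your last paragraph is essentially the paper's proof. The paper does not use a general ``Adams filtration $\ge q$ implies $\tau^q$-divisible'' criterion, nor a detection statement for $t_K$ itself. Instead it builds the rank-one lift $\widetilde t\colon\Sigma^{0,1}\nu\bb S[BC_p]\to\nu\bb S$ by hand, as the connecting map of $\nu$ applied to $\bb S\to C_t\to\Sigma\bb S[BC_p]$. That sequence is short exact on mod-$p$ homology since $t_*=0$, so the reduction of $\widetilde t$ is by construction the Yoneda class of the extension, which AGM identify with $\epsilon$. The paper then takes $\widetilde t^{\otimes q_K}$ and precomposes with $\nu$ of the map induced by $f_K\colon Q\times E\to Q$, $(a,e)\mapsto a+\bar e$. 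The pullback formula for transfers gives the realization statement, and AGM's Proposition~10.3 identifies the reduction.

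Your flag factorization is the same idea, but the flag has to live in $Q\times E$, running from the stabilizer $\ker f_K$ up to $Q\times E$; equivalently, write $t_K=t^{\otimes q_K}\circ Bf_K$. A flag $K<\cdots<E$ only describes the restriction of $t_K$ along the bottom cell $\bb S\to\bb S[B(E/K)]$.

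What the explicit multiplicative construction buys over your main line is control of the reduction, not existence. Existence at the colimit level is fine once $t_K$ is a composite of $q_K$ maps vanishing on mod-$p$ homology, by iterating the cofiber-sequence argument. The issue is uniqueness. Lifts of $\nu t_K$ along $\tau^{q_K}$ form a torsor under $\tau^{q_K}$-torsion maps, not under ``choices of detecting class''. The reduction of such a torsion map can be a nonzero boundary, i.e.\ the target of some $d_r$ with $2\le r\le q_K$.

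So for $q_K\ge 2$, knowing from AGM that $t_K$ is detected by their evaluation class only pins down $C\tau\otimes\widetilde f$, for a chosen lift $\widetilde f$, modulo boundaries. The later argument, however, needs $C\tau\otimes d_E$ to be the AGM equivalence on the nose. This can be repaired by subtracting a $\tau$-torsion lift of the discrepancy, but that step is missing from your argument. The paper avoids the issue because the rank-one reduction is canonical (in filtration one there is no such indeterminacy), and monoidality then forces the correct class in every rank.

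Finally, neither Lin's theorem nor a Dickson/Euler-class description is needed. Filtration one for $t$ is just $t_*=0$ on $H_*(-;\bb F_p)$. What is used from AGM is the identification of the rank-one class with $\epsilon$, together with their Proposition~10.3 relating the higher-rank evaluation classes to it.
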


\begin{proof}
Consider the transfer associated to the universal $C_p$-bundle, $t:\bb S[BC_p]\longrightarrow\bb S.$

Since $t$ induces the zero map on mod-$p$ homology, the rotated cofiber sequence
\[
\bb S\longrightarrow C_t
\longrightarrow\Sigma\bb S[BC_p]
\]
is short exact on mod-$p$ homology and hence is
preserved by the synthetic analogue functor $\nu$.
Its connecting morphism defines a map
\[
\widetilde t:
\Sigma^{-1}\nu(\Sigma\bb S[BC_p])
=\Sigma^{0,1}\nu\bb S[BC_p]
\longrightarrow\nu\bb S.
\]
Choose the boundary sign so that its $\tau$-inversion is $t$. Modulo $\tau$, this map represents the
extension class of the displayed short exact sequence. By \cite[\S~10]{AGM}, this agrees,
up to sign, with the class $\epsilon\in \operatorname{Ext}_{\mc A}^{1,1} (\bb F_p,H^*(BC_p;\bb F_p))$
used in the AGM calculation.

Now fix $K\le E$ and write $Q=E/K$. Choose an isomorphism $Q\cong(C_p)^{q_K}$.
Taking the $q_K$-fold tensor product of $\widetilde t$ gives a map
$\widetilde t_Q: \Sigma^{0,q_K}\nu\bb S[BQ] \longrightarrow\nu\bb S.$

Consider the homomorphism
\[
f_K:Q\times E\longrightarrow Q,
\qquad
(a,e)\longmapsto a+\bar e,
\]
where $\bar e$ denotes the image of $e$ in $Q$.
By the symmetric monoidality of $\nu$,
precomposing $\widetilde t_Q$ with the map
induced by $f_K$ gives an evaluation map
\[
\widetilde e_K:
\Sigma^{0,q_K}
\nu\bigl(\bb S[BQ]\otimes\bb S[BE]\bigr)
\longrightarrow\nu\bb S.
\]

The associated $(Q\times E)$-set is $Q$, with the action induced by $f_K$. This is precisely
the evaluation set appearing in \cite[\S~9]{AGM}.

By the pullback formula for transfers, the $\tau$-inversion of $\widetilde e_K$ is the
transfer associated to $\ker(f_K)\le Q\times E,$ followed by collapse. Moreover, the identification
of the rank-one class with $\epsilon$, together
with \cite[Proposition~10.3]{AGM}, shows that
the reduction of $\widetilde e_K$ modulo $\tau$
agrees with the corresponding AGM evaluation class.

Restricting the $\bb S[BE]$ variable to the finite skeleton $X_j$, we obtain compatible maps
$\Sigma^{0,q_K}\nu\bb S[BQ]\otimes\nu X_j \longrightarrow\nu\bb S.$
Since $X_j$ is finite, gives compatible maps $\Sigma^{0,q_K}\nu\bb S[BQ] \longrightarrow\nu(DX_j).$ Passing to the limit over $j$ and summing over
$K\le E$ gives the desired map
$$d_E: \bigoplus_{K\le E} \Sigma^{0,q_K}\nu\bb S[B(E/K)] \longrightarrow S_E.$$
Finally, by the preceding identification with AGM's evaluation classes, $C\tau\otimes d_E$
identifies with the equivalence of Proposition~\ref{prop1.6AGM}.
\end{proof}

\begin{thm}[Elementary abelian Borel projection formula]
\label{thm:ea-projection}
Let $E\cong (C_p)^r$, and let $Y$ be a bounded-below $p$-complete spectrum, $I$-complete. Then the canonical projection map
\[
(\bb S_p)^{BE}\otimes Y
\xrightarrow{\simeq} Y^{BE}
\]
is an equivalence.
\end{thm}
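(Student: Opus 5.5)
The idea is to use the synthetic lift $d_E$ of Lemma~\ref{lem:synthetic-agm} to replace $S_E$ by a sum of shifted $\nu\bb S[B(E/K)]$. Unlike $S_E$ itself, these are honest synthetic analogues of bounded-below spectra, so $\tau$-completion can be controlled after tensoring with $\nu Y$.

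Consider the composite
\[
\bigoplus_{K\le E}\Sigma^{0,q_K}\nu\bigl(\bb S[B(E/K)]\otimes Y\bigr)\simeq\Bigl(\bigoplus_{K\le E}\Sigma^{0,q_K}\nu\bb S[B(E/K)]\Bigr)\otimes\nu Y\xrightarrow{d_E\otimes\nu Y}S_E\otimes\nu Y\xrightarrow{\alpha_Y}S_E(Y),
\]
where the first identification uses the symmetric monoidality of $\nu$. Since $C\tau$ is dualizable, reduction modulo $\tau$ of the composite is the AGM equivalence of Proposition~\ref{prop1.6AGM} tensored with $H_*(Y)$, followed by the equivalence of Proposition~\ref{prop:ea-continuity}. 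The composite is therefore a $C\tau$-equivalence.

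Next I check $\tau$-completeness of both ends. The target $S_E(Y)$ is $\tau$-complete, as shown above. Each $\bb S[B(E/K)]\otimes Y$ is bounded below, and its $p$-completion is $(\bb S[B(E/K)]\otimes Y)^\wedge_p$. By the recalled formula $(\nu X)^\wedge_\tau\simeq\nu(X^\wedge_p)$, the $\tau$-completion of the source is $\bigoplus_K\Sigma^{0,q_K}\nu\bigl((\bb S[B(E/K)]\otimes Y)^\wedge_p\bigr)$; the sum is finite, so completion commutes with it. Since tensoring with $C\tau$ is conservative on $\tau$-complete objects, this gives
\[
\bigoplus_{K\le E}\Sigma^{0,q_K}\nu\bigl((\bb S[B(E/K)]\otimes Y)^\wedge_p\bigr)\xrightarrow{\simeq}S_E(Y).
\]
Inverting $\tau$ kills the weight shifts $\Sigma^{0,q_K}$, uses $\nu(Z)[\tau^{-1}]\simeq Z$, and uses $S_E(Y)[\tau^{-1}]\simeq Y^{BE}$ from Lemma~2.7 of \cite{BSlarge2024}. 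This yields an equivalence of spectra
\[
\bigoplus_{K\le E}(\bb S[B(E/K)]\otimes Y)^\wedge_p\xrightarrow{\simeq}Y^{BE}.
\]
By the last clause of Lemma~\ref{lem:synthetic-agm}, its $K$-th component is the transfer-evaluation adjoint tensored with $Y$.

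Running the same argument with $Y=\bb S_p$ gives $\bigoplus_K\bb S[B(E/K)]^\wedge_p\simeq(\bb S_p)^{BE}$; here $\bb S_p$ is trivially $I$-complete, or one can quote AGM directly. Both equivalences are induced by the same maps, naturally in $Y$. Tensoring the $\bb S_p$ equivalence with $Y$ in $p$-complete spectra and using naturality of the projection map, the triangle formed by $(\bb S_p)^{BE}\otimes Y$, the sum $\bigoplus_K(\bb S[B(E/K)]\otimes Y)^\wedge_p$, and $Y^{BE}$ commutes. Two of its three maps are equivalences, so the projection map is an equivalence.

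The main obstacle is this last compatibility. One must verify that the $\tau$-inverted map $S_E\otimes\nu Y\to S_E(Y)$ agrees with the topological projection map $\bb S^{BE}\otimes Y\to Y^{BE}$, using that $\alpha_Y$ is built from the levelwise maps $\nu(DX_j)\otimes\nu Y\to\nu(DX_j\otimes Y)$. One must also check that $\tau$-inversion of $d_E$ is compatible with $p$-completion. I would handle this by writing everything on finite skeleta $X_j$, where all maps are manifestly the evident ones, and then passing to the limit using the coassembly equivalence of Lemma~2.7 of \cite{BSlarge2024}.
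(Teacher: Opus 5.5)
Your proposal is correct and follows the paper's proof essentially step for step. You use the same composite $\alpha_Y\circ(d_E\otimes\nu Y)$, the same $C\tau$-equivalence via Propositions~\ref{prop1.6AGM} and~\ref{prop:ea-continuity}, the identification $(\nu Z)^\wedge_\tau\simeq\nu(Z^\wedge_p)$ on the finite sum, $\tau$-inversion via \cite[Lemma~2.7]{BSlarge2024}, and two-out-of-three against the case $Y=\bb S_p$; your skeleton-wise check of compatibility with the projection map is a reasonable way to expand the paper's one-line claim that this holds componentwise.
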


\begin{proof}
Tensor the map $d_E$ of Lemma~\ref{lem:synthetic-agm} with $\nu Y$ and compose with the comparison map
$\alpha_Y$. This gives a map
\[
d_{E,Y}:
\bigoplus_{K\le E}
\Sigma^{0,q_K}
\nu\bigl(\bb S[B(E/K)]\otimes Y\bigr)
\longrightarrow S_E(Y).
\]
Here the tensor products inside $\nu$ are taken in ordinary spectra. Its reduction modulo $\tau$ is the AGM equivalence
tensored with $H_*(Y)$, followed by the canonical comparison of Proposition~\ref{prop:ea-continuity}.
Consequently, $d_{E,Y}$ is a $C\tau$-equivalence.

Recall that $S_E(Y)$ is $\tau$-complete. For any bounded-below spectrum $Z$, the canonical
map $\nu Z\to\nu(Z^\wedge_p)$ is a $C\tau$-equivalence, and $\nu(Z^\wedge_p)$ is
$\tau$-complete by \cite[Remark~2.6]{BSlarge2024}. Thus $(\nu Z)^\wedge_\tau\simeq\nu(Z^\wedge_p).$

Applying this to $Z=\bb S[B(E/K)]\otimes Y$, and using that $E$ has finitely many subgroups, we obtain an equivalence
$\bigoplus_{K\le E} \Sigma^{0,q_K} \nu\left( \bigl(\bb S[B(E/K)]\otimes Y\bigr)^\wedge_p \right) \xrightarrow{\simeq} S_E(Y).$ Inverting $\tau$ and applying \cite[Lemma~2.7]{BSlarge2024} gives an equivalence
$$\bigoplus_{K\le E} \bigl(\bb S[B(E/K)]\otimes Y\bigr)^\wedge_p \xrightarrow{\simeq}Y^{BE}.$$

By construction, this equivalence is induced by the transfer evaluations of Lemma~\ref{lem:synthetic-agm}, tensored with $Y$.
In particular, applying the same argument to $Y=\bb S_p$ gives a transfer-induced equivalence
$\bigoplus_{K\le E} \bigl(\bb S[B(E/K)]\otimes\bb S_p\bigr)^\wedge_p \xrightarrow{\simeq}(\bb S_p)^{BE}.$

The equivalence for $Y$ is the composite of this equivalence tensored with $Y$ and the canonical
projection map 
\[
(\bb S_p)^{BE}\otimes Y
\longrightarrow Y^{BE}.
\]
Indeed, this compatibility holds on each transfer-evaluation component. The projection map is therefore an equivalence by two-out-of-three.
\end{proof}

\subsection{Induction to all $p$-groups}\label{sec:finite-groups}

Throughout this section, tensor products of $p$-complete spectra are taken in the symmetric monoidal category
of $p$-complete spectra. Similarly, tensor products, varourious fixed points and generalised Tate constructions of $p$-complete genuine $G$-spectra are taken in the corresponding $p$-complete genuine equivariant category. Whenever ordinary tensor products are required, we specify this explicitly.

We now extend the elementary abelian Borel projection formula to arbitrary finite $p$-groups by induction on their order,
following an argument similar to Carlsson's \cite{carlsson1984equivariant}.

Recall that the classical Segal conjecture asserts that the $p$-completed genuine $G$-sphere is Borel complete.
Since categorical fixed points preserve colimits, for every subgroup $H\leq G$, the map on $H$-fixed points
induced by the Borel-completion unit identifies with the canonical projection map
\[
(\bb S_p)^{BH}\otimes X\longrightarrow X^{BH}.
\]
Consequently, by the joint conservativity of categorical fixed points, it suffices to prove that $iX$ is Borel complete.

We begin with a general vanishing criterion for towers of spectra, which will be used to extend Carlsson's
Euler-nilpotence argument to bounded-below $p$-complete spectra.

\begin{lem} \label{lem:tower_zero}
Let $\{T_k\}_{k\geq 0}$ be a tower of bounded-below
$p$-complete spectra. Suppose there exists $d>0$
such that, for every $k$, the map $T_{k+d}\longrightarrow T_k$
induces zero on mod-$p$ homology. Then, for every
bounded-below $p$-complete spectrum $X$,
\[
\lim_k(T_k\otimes X)\simeq 0.
\]
\end{lem}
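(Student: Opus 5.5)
We work in synthetic spectra, where a map that is zero on mod-$p$ homology lifts to a map divisible by $\tau$; the vanishing then follows from $\tau$-completeness after inverting $\tau$.

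\textbf{Step 1: lifting through $\tau$.} Put $W_k:=T_k\otimes X$, computed in $p$-complete spectra. Each $W_k$ is bounded below and $p$-complete, so $\nu W_k$ is $\tau$-complete. The transition map $f:T_{k+d}\to T_k$ induces zero on $H_*(-;\mathbb F_p)$. Hence, by the Künneth formula, $f\otimes X:W_{k+d}\to W_k$ also induces zero on mod-$p$ homology. Now use the standard fact that a map of spectra which is zero on $\mathbb F_p$-homology has $C\tau\otimes\nu(f)=0$. Then $\nu(f)$ factors through $\tau:\Sigma^{0,-1}\nu W_k\to\nu W_k$. Indeed, $C\tau\otimes\nu(-)$ corresponds to $H_*(-)$ in the heart, and a morphism between heart objects is determined by its effect on comodules. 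Concretely, $\nu(f\otimes X)=\tau\circ g_k$ for some $g_k:\nu W_{k+d}\to\Sigma^{0,-1}\nu W_k$.

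\textbf{Step 2: vanishing of the synthetic limit.} Restrict to the cofinal subtower $k=md$, and set $L:=\lim_m\nu W_{md}$. Iterating Step 1, the composite $\nu W_{(m+n)d}\to\nu W_{md}$ is divisible by $\tau^n$.

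We want to show $L$ is $\tau$-complete with $C\tau\otimes L\simeq 0$. It is $\tau$-complete as a limit of $\tau$-complete objects. For the second claim, $C\tau$ is dualizable, so $C\tau\otimes L\simeq\lim_m(C\tau\otimes\nu W_{md})$. In this tower every transition map is zero. However, a tower with zero transition maps only has vanishing $\lim$ and $\lim^1$ at the level of homotopy, and that is exactly what we need. Hence $C\tau\otimes L\simeq0$, and since tensoring with $C\tau$ is conservative on $\tau$-complete objects, $L\simeq0$.

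\textbf{Step 3: inverting $\tau$.} By \cite[Lemma 2.7]{BSlarge2024}, $L[\tau^{-1}]\simeq\lim_m W_{md}\simeq\lim_k(T_k\otimes X)$. Since $L\simeq0$, this limit vanishes.

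\textbf{Main obstacle.} The delicate point is Step 1: the lift must be natural enough to assemble into maps of towers. I avoid this by never using the lifts $g_k$ as a tower. I only use that $C\tau\otimes\nu(f\otimes X)=0$, which is a statement about individual maps. Two things still need checking:
\begin{enumerate}
\item The identification $C\tau\otimes\nu W\leftrightarrow H_*(W)$ in the heart of $\mathrm{Stable}(\mathcal A_*)$ is faithful on maps of the form $C\tau\otimes\nu(-)$. This holds because the functor $\mathrm{Mod}_{C\tau}\to\mathrm{Stable}(\mathcal A_*)$ is fully faithful and both sides are heart objects.
\item The Künneth step, since the tensor product is $p$-completed. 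Here $H_*(T\otimes X)=H_*(T)\otimes H_*(X)$, because $p$-completion of bounded-below spectra does not change mod-$p$ homology.
\end{enumerate}
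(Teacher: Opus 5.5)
Your proof is correct, but it takes a genuinely different route from the paper's.

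The paper argues by d\'evissage on $X$, entirely within ordinary spectra. It calls $M$ good if $\lim_k(T_k\otimes M)\simeq 0$ and notes that every $\mathbb F_p$-module is good: the $d$-fold transition maps of $\mathbb F_p\otimes T_k$ are null as $\mathbb F_p$-module maps, so the tower is pro-zero. It extends this by exactness to Eilenberg--MacLane spectra of $p$-power torsion groups, and then to the finite Postnikov pieces $\tau_{\le m}X/p^n$. Finally it recovers $X$ by commuting $T_k\otimes(-)$ past the Postnikov limit, using that $T_k$ is bounded below, and past $\lim_n(-)/p^n$, using $p$-completeness.

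You instead apply $\nu$ to the tower $T_k\otimes X$ itself. The hypothesis says the $d$-fold transition maps have positive Adams filtration, so the tower becomes pro-zero mod $\tau$. Then $\tau$-completeness of $\nu$ of bounded-below $p$-complete spectra kills the synthetic limit, and \cite[Lemma~2.7]{BSlarge2024} transports the vanishing back to spectra.

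What each route buys:
\begin{itemize}
\item Your version is shorter and uses exactly the synthetic toolkit already deployed in Theorem~\ref{thm:ea-projection}. The hypotheses on $X$ enter only through $\tau$-completeness, i.e.\ Adams convergence.
\item The paper's version is elementary. It needs only the Milnor sequence and the fact that maps of $\mathbb F_p$-modules are detected on homotopy, and it makes visible where each hypothesis is used.
\end{itemize}

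One small point of phrasing in Step~2. Justify the vanishing of $\lim_m(C\tau\otimes\nu W_{md})$ by noting that, for every synthetic spectrum $A$, the tower $\operatorname{map}(A,C\tau\otimes\nu W_{md})$ is pro-zero, so $\lim$ and $\lim^1$ of its homotopy vanish. This works in any stable $\infty$-category and does not depend on bigraded homotopy groups detecting equivalences in $\syn{\mathbb F_p}$. As you observe yourself, the factorization through $\tau$ in Step~1 is never actually used.
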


\begin{proof}
Call a spectrum $M$ good if
$\lim_k(T_k\otimes M)\simeq 0$.

By hypothesis, the maps
$\bb F_p\otimes T_{k+d}\longrightarrow \bb F_p \otimes T_k$
are null. This give:

\begin{enumerate}
    \item Every $\bb F_p$-module is good.
    \item By exactness, every abelian group $A$ annihilated by a power of $p$ is good.
    \item Since $X$ be bounded below, every homotopy group of $X/p^n$ is annihilated by $p^{2n}$, hence $\tau_{\leq m}X/p^n$
are good.
\end{enumerate}

For each fixed $k$, boundedness below of $T_k$ gives $T_k\otimes X/p^n \simeq \lim_m(T_k\otimes\tau_{\leq m}X/p^n).$ Finally, we obtain
\[
\begin{aligned}
\lim_k(T_k\otimes X) &\simeq \lim_n\lim_m\lim_k (T_k\otimes\tau_{\leq m}X/p^n) \simeq 0.
\end{aligned}
\]
\end{proof}

\begin{corr} \label{corr:free-piece}
Let $G$ be a non-elementary abelian finite $p$-group and put $\rho=\mathbb C[G]-\mathbb C$.
For every bounded-below $p$-complete spectrum $X$, we have $\operatorname{map}_G (S^{\infty\rho},EG_+\otimes iX)\simeq 0.$
\end{corr}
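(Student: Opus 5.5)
We reduce the mapping spectrum to a limit over a tower of Euler-class multiplications and then apply Lemma~\ref{lem:tower_zero}.

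\textbf{Step 1: write the mapping spectrum as a limit.} Since $S^{\infty\rho}=\colim_n S^{n\rho}$, we have
\[
\operatorname{map}_G(S^{\infty\rho},EG_+\otimes iX)\simeq\lim_n\operatorname{map}_G(S^{n\rho},EG_+\otimes iX)\simeq\lim_n\bigl(S^{-n\rho}\otimes EG_+\otimes iX\bigr)^G .
\]
The transition maps are induced by the Euler class $a_\rho:S^0\to S^\rho$.

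\textbf{Step 2: pass to orbits.} The spectrum $EG_+\otimes(-)$ is free, so the Adams isomorphism identifies $(EG_+\otimes S^{-n\rho}\otimes iX)^G$ with $(EG_+\otimes S^{-n\rho})_{hG}\otimes X$. This uses that $X$ carries the trivial action and that orbits commute with tensoring by $X$; in the $p$-complete setting we $p$-complete at the end. The object $(EG_+\otimes S^{-n\rho})_{hG}$ is the Thom spectrum $T_n:=BG^{-n\rho}$. Each $T_n$ is bounded below, since $BG$ is connected with finite skeleta and the Thom spectrum has cells in degrees $\geq -n\dim\rho$. The maps $T_{n+1}\to T_n$ are induced by the zero section of $\rho$. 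This gives
\[
\operatorname{map}_G(S^{\infty\rho},EG_+\otimes iX)\simeq\lim_n\bigl(T_n\otimes X\bigr)^\wedge_p ,
\]
and the $p$-completions $(T_n)^\wedge_p$ form a tower of bounded-below $p$-complete spectra.

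\textbf{Step 3: verify the hypothesis of Lemma~\ref{lem:tower_zero}.} By the Thom isomorphism, the map $T_{n+d}\to T_n$ on mod-$p$ homology is dual to multiplication by the mod-$p$ Euler class $e(\rho)^d\in H^*(BG;\bb F_p)$, twisted by Thom classes. It therefore suffices to show that $e(\rho)$ is nilpotent in $H^*(BG;\bb F_p)$ when $G$ is not elementary abelian. This is exactly Carlsson's input, and it follows from Quillen's theorem. By Quillen–Venkov, an element of $H^*(BG;\bb F_p)$ is nilpotent if and only if it restricts nilpotently to every elementary abelian subgroup $E\leq G$. Since $G$ is not elementary abelian, every such $E$ is proper. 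The restriction of $\rho=\mathbb C[G]-\mathbb C$ to $E$ is $[G:E]\,\mathbb C[E]-\mathbb C$, which contains a trivial summand of dimension $[G:E]-1>0$. The Euler class of a trivial summand vanishes, so $e(\rho)|_E=0$. Hence $e(\rho)^d=0$ for some $d$, and the maps $T_{n+d}\to T_n$ are zero on mod-$p$ homology. One should also check that the Thom isomorphism is compatible with the zero-section maps; this is standard, since the composite is cup product with the Euler class.

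\textbf{Step 4: conclude.} Lemma~\ref{lem:tower_zero}, applied to the tower $(T_n)^\wedge_p$ and the bounded-below $p$-complete spectrum $X$, gives $\lim_n(T_n\otimes X)\simeq0$.

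The main obstacle is Step 2. One has to identify the geometric Euler-class tower with a Thom-spectrum tower and commute the $p$-completed tensor product with $X$ past the fixed points. The Adams isomorphism handles the free part. Only bounded-belowness of $X$ enters, through Lemma~\ref{lem:tower_zero}, where the limit over $n$ is controlled.
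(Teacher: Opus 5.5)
Your proposal is correct and follows essentially the same route as the paper. You identify the mapping spectrum with $\lim_k (BG^{-k\rho})^\wedge_p\otimes X$, observe that under the Thom isomorphism the transition maps are multiplication by powers of the mod-$p$ Euler class $e(\rho)$, and conclude with Lemma~\ref{lem:tower_zero}. The differences are minor: where the paper cites Carlsson's Lemma~III.1 for the nilpotence of $e(\rho)$, you reprove it via Quillen--Venkov, using that $\rho$ restricted to each (necessarily proper) elementary abelian subgroup has a trivial summand. You also spell out the Adams isomorphism step that the paper leaves implicit.
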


\begin{proof}
Naturally in $k$, we have identification
$\operatorname{map}_G (S^{k\rho},EG_+\otimes iX) \simeq (BG^{-k\rho})^{\wedge}_p\otimes X.$ By \cite[Lemma~III.1]{carlsson1984equivariant}, the $\bb F_p$-Euler class $e(\rho)$ is nilpotent. Under the Thom isomorphism, the transition maps in the tower $\{(BG^{-k\rho})^{\wedge}_p\}_k$
are given on mod-$p$ cohomology by multiplication by powers of $e(\rho)$, hence we are done by \ref{lem:tower_zero}.
\end{proof}

We next establish a comparison result for generalized Tate constructions, which will allow us to
apply the Quigley--Shah reconstruction theorem (see. \ref{rec:QSreconstruction}) in the inductive step.

\begin{lem}\label{lem:tate-coefficient}
Let $G$ be a finite $p$-group and $X$ a bounded-below $p$-complete spectrum. Suppose that $iX$ is Borel complete
for every $p$-group of order strictly less than $|G|$. Let $1\neq K\leq G$, and suppose that $K$ is subconjugate
to $L\leq G$. For every finite $W_K$-set $A$, the canonical map
\[
\tau_K^L(\bb S_p[A])\otimes X
\longrightarrow
\tau_K^L(\bb S_p[A]\otimes X)
\]
is an equivalence. Moreover, $\tau_K^L(\bb S_p[A])$ is a finite $p$-complete permutation $W_L$-spectrum.
\end{lem}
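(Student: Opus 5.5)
Since $A$ is a finite $W_K$-set, $\bb S_p[A]$ splits as a finite sum of orbits $\bb S_p[W_K/J]$ with $J\le W_K$. Both sides of the comparison map are exact in the coefficient, so it suffices to treat a single orbit $A=W_K/J$. Write $J=N/K$ with $K\le N\le N_G(K)$. The plan is to compute $\iota_K(\bb S_p[W_K/N/K])$ as an honest genuine $G$-spectrum built from a smaller group, and then recognise both sides of the map as geometric fixed points of something of the form $G_+\otimes_{N}(\text{inflated object})$.

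First identify $\iota_K$ on orbits. The functor $\iota_K$ is the right adjoint of $\Phi^K:\Sp^G\to\Fun(BW_K,\Sp)$. On $N_G(K)$-spectra, $\iota_K$ sends $M$ to $\widetilde E\mc F[\not\supseteq K]\otimes F(E W_{K+},\text{infl}\,M)$-type objects, suitably induced up to $G$. For $M=\bb S_p[W_K/J]\otimes Y$ one gets, by the induction/inflation compatibilities of geometric fixed points,
\[
\iota_K(\bb S_p[W_K/J]\otimes Y)\simeq G_+\otimes_{N}\Bigl(\widetilde E\mc F_K\otimes \beta_{N/K}(\text{infl}^{N/K}Y)\Bigr)\ \text{(inflated from $N/K$ to $N$)},
\]
where $\mc F_K$ is the family of subgroups of $N$ not containing $K$. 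Here $\beta_{N/K}$ is the Borel completion on $N/K$. The essential point is that $Y$ enters only through the Borel completion of a trivial action of the quotient $N/K$, which has order $<|G|$ because $K\neq 1$.

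Next use the inductive hypothesis. Since $|N/K|<|G|$, $i_{N/K}X$ is Borel complete, so $\beta_{N/K}(\text{infl}\,X)\simeq i_{N/K}X\simeq \bb S_{N/K}\otimes X$, and the same holds for $X=\bb S_p$ by the classical Segal conjecture. Hence $\beta(\text{triv}\,X)\simeq \beta(\text{triv}\,\bb S_p)\otimes X$ in $p$-complete genuine $N/K$-spectra. The subsequent operations are all colimit-preserving and commute with $-\otimes X$ via the projection formula for colimit-preserving $\Sp$-linear functors: tensoring with $\widetilde E\mc F_K$, inflation, induction $G_+\otimes_N-$, and $\Phi^L$ (including the residual $W_L$-action). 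This gives the equivalence $\tau_K^L(\bb S_p[A])\otimes X\simeq\tau_K^L(\bb S_p[A]\otimes X)$. I expect the main obstacle to be making the identification of $\iota_K$ on orbits precise, namely checking that the right adjoint really is this induced object and not merely agrees after $\Phi^K$. To handle it, I would verify the adjunction by computing $\Phi^{H}$ of the candidate for every $H$: it is $0$ unless $H\supseteq K$ up to conjugacy, and for $H=K$ it is the given object. I would then appeal to the characterisation of the essential image of $\iota_K$ in \cite{QS25}.

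For the finiteness claim, take $X=\bb S_p$. Then $\beta(\bb S_p)$ on $N/K$ is $\bb S_{N/K,p}$ by the Segal conjecture, so $\tau_K^L(\bb S_p[W_K/J])\simeq\Phi^L\bigl(G_+\otimes_N\widetilde E\mc F_K\otimes\bb S_{N}\bigr)$. The double coset formula reduces $\Phi^L(G_+\otimes_N -)$ to a finite sum over $L\backslash G/N$ of induced objects. Each summand is $\Phi^{L'}$ of $\widetilde E\mc F_K\otimes\bb S$ for some conjugate $L'\le N$. Geometric fixed points kill $\widetilde E\mc F_K$ unless $L'\supseteq K$, and in that case they return $\bb S_p$. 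Consequently $\tau_K^L(\bb S_p[A])$ is a finite sum of $\bb S_p[W_L/?]$, that is, a finite $p$-complete permutation $W_L$-spectrum.
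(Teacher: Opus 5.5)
Your proposal is correct and follows essentially the same route as the paper. Both arguments identify $\iota_K$ as (co)induction of $\widetilde E\mc F_K\otimes\operatorname{Inf}\beta(-)$, use the inductive hypothesis (and the classical Segal conjecture) on a quotient of order $<|G|$ to commute $\beta$ past $-\otimes X$, apply the projection formula to the remaining functors and $\Phi^L$, and read off the permutation $W_L$-set from the geometric fixed points of an induced spectrum. The differences are minor: you split $A$ into orbits and apply the hypothesis to $J=N/K$ rather than using dualizability of $\bb S_{W,p}[A]$ over $W_K$; and the formula for $\iota_K$ is more cleanly obtained by passing to right adjoints in $\Phi^K\simeq u_W\circ(-)^K\circ(\widetilde E\mc F_K\otimes-)\circ\operatorname{Res}^G_{N_G(K)}$ than by your $\Phi^H$-check, which on its own cannot see the completeness condition that cuts out the image of $\iota_K$.
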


\begin{proof}
Put $N=N_G(K)$ and $W=N/K$, and let $\mc F_K=\{H\leq N \mid K\not\leq H\}$. The right adjoint $\iota_K$ is given by
$\iota_K\simeq \operatorname{Coind}_N^G \left( \widetilde E\mc F_K\otimes \operatorname{Inf}_W^N\beta_W(-) \right). $
Since $|W|<|G|$, the induction hypothesis and the classical Segal conjecture give
\[
\beta_W(\bb S_p[A]\otimes X)
\simeq \bb S_{W,p}[A]\otimes i_WX,
\]
where $\bb S_{W,p}[A]$ denotes the $p$-complete genuine permutation $W$-spectrum. Indeed,
$\bb S_{W,p}[A]$ is dualizable and hence preserves Borel completeness under tensoring.
The remaining functors in the description of $\iota_K$, together with $\Phi^L$, satisfy the
projection formula, proving the first assertion.

For the second, the classical Segal conjecture gives $\beta_W(\bb S_p[A])\simeq\bb S_{W,p}[A]$.
The geometric fixed-point formula for induced spectra then gives $\tau_K^L(\bb S_p[A]) \simeq \bb S_p[U_K^L(A)],$ where
$U_K^L(A)= \left\{ [g,a]\in(G\times_N A)^L: K\leq g^{-1}Lg \right\}$
is a finite $W_L$-set, with the residual action induced by left multiplication.
\end{proof}

\begin{lem}[Singular-piece vanishing] \label{lem:singular-piece}
Let $G$ be a non-elementary abelian finite $p$-group and let $X$ be a bounded-below
$p$-complete spectrum. Suppose that $iX$ is Borel-complete for every $p$-group of order
strictly less than $|G|$. Then
$\operatorname{map}_G (\widetilde E\mathcal P,\widetilde EG\otimes iX) \simeq 0.$
\end{lem}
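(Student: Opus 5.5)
The plan is to show that $Z:=\widetilde EG\otimes iX$ is $\mc P$-complete, where $\mc P$ is the family of proper subgroups. Completeness means $F(\widetilde E\mc P,Z)\simeq 0$, and taking $G$-fixed points of this gives the assertion. We detect the map $Z\to Z^\wedge_{\mc P}=F(E\mc P_+,Z)$ on geometric fixed points. For $H\in\mc P$, the map $\Phi^H(Z)\to\Phi^H(Z^\wedge_{\mc P})$ is an equivalence for formal reasons, since $E\mc P_+\to \bb S_G$ restricts to an equivalence on $H$. So only $\Phi^G$ remains. Since $Z^\wedge_{\mc P}$ is $\mc P$-complete, \Cref{rec:QSreconstruction} computes $\Phi^G(Z^\wedge_{\mc P})$ as $\lim_{\mc J_G}\tau^{G}_{K_n}\cdots\tau^{K_1}_{K_0}\Phi^{K_0}(Z^\wedge_{\mc P})$, and we may replace $\Phi^{K_0}(Z^\wedge_{\mc P})$ by $\Phi^{K_0}(Z)$. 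Hence it suffices to show that the canonical map
\[
\Phi^G(Z)\longrightarrow \lim_{\mc J_G}\tau^{G}_{K_n}\cdots\tau^{K_1}_{K_0}\Phi^{K_0}(Z)
\]
is an equivalence.

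Next we compute the inputs. We have $\Phi^1(Z)\simeq 0$, because $\widetilde EG$ is contractible underlying. For $K\neq 1$, $\Phi^K(Z)\simeq\Phi^K(iX)\simeq X$ with trivial $W_K$-action, that is, $\bb S_p[W_K/W_K]\otimes X$. Chains whose bottom term is $K_0=1$ therefore contribute zero, and the diagram is effectively indexed by chains of nontrivial proper subgroups. We then apply \Cref{lem:tate-coefficient} inductively along each chain; its hypotheses are exactly the inductive assumption. Each iterated Tate construction satisfies
\[
\tau\cdots\tau(\bb S_p[W_{K_0}/W_{K_0}]\otimes X)\simeq \tau\cdots\tau(\bb S_p)\otimes X,
\]
and at every stage the coefficient is again a finite $p$-complete permutation spectrum, which is what allows the lemma to be applied at the next step. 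These identifications are natural in the gluing transformations, so the whole diagram is the corresponding diagram for $X=\bb S_p$ tensored with $X$. The indexing category $\mc J_G$ is a finite poset, because $G$ has finitely many subgroups. The limit is therefore a finite limit and commutes with $-\otimes X$. Similarly, $\Phi^G(Z)\simeq X\simeq\Phi^G(\widetilde EG\otimes\bb S_{G,p})\otimes X$. Thus the comparison map for $X$ is the comparison map for $\bb S_p$ tensored with $X$.

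It remains to treat the sphere, that is, to show $\widetilde EG\otimes\bb S_{G,p}$ is $\mc P$-complete. Consider the cofiber sequence $EG_+\otimes\bb S_{G,p}\to\bb S_{G,p}\to\widetilde EG\otimes\bb S_{G,p}$. By the classical Segal conjecture, $\bb S_{G,p}$ is Borel complete, hence $\{1\}$-complete and a fortiori $\mc P$-complete. The fiber is $\mc P$-complete because \Cref{corr:free-piece} with $X=\bb S_p$ gives $\operatorname{map}_G(S^{\infty\rho},EG_+\otimes i\bb S_p)\simeq 0$, together with $\widetilde E\mc P\simeq S^{\infty\bar\rho}$. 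This step uses the hypothesis that $G$ is not elementary abelian.

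The main obstacle is the naturality bookkeeping in the second step. One must check that the identifications of \Cref{lem:tate-coefficient} are compatible with the gluing maps $\tau^M_K\to\tau^M_L\tau^L_K$ and with the units of $\Phi^K\dashv\iota_K$. Only then is the QS diagram for $X$ really the diagram for $\bb S_p$ tensored with $X$, rather than merely objectwise equivalent to it. I would handle this by observing that all the maps are built from projection-formula maps for colimit-preserving functors of $p$-complete genuine spectra, which are natural lax-linear transformations.
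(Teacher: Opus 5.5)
Your proposal is correct and follows essentially the same route as the paper. You use Quigley--Shah reconstruction, \Cref{lem:tate-coefficient}, and finiteness of the indexing poset to reduce $\mc P$-completeness of $\widetilde EG\otimes iX$ to the case $X=\bb S_p$, and then settle the sphere via the classical Segal conjecture, \Cref{corr:free-piece}, and isotropy separation. The differences are cosmetic: you compare the $\mc P$-completion unit on $\Phi^G$ directly, whereas the paper first proves $(Z_{\bb S})^\wedge_{\mc P}\otimes iX\simeq(Z_X)^\wedge_{\mc P}$ and then composes, and you rightly flag the same naturality bookkeeping for the gluing maps that the paper also treats only briefly.
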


\begin{proof}
Put $Z_X=\widetilde EG\otimes iX, Z_{\bb S}=\widetilde EG\otimes i\bb S_p.$We proceed in three steps:

\begin{enumerate}
    \item $\mc P$-completion commutes with tensoring by $X$ on the singular piece.
    \item  $Z_{\bb S}$ is $\mc P$-complete using the classical Segal conjecture and Carlsson's Euler-nilpotence theorem
    \item  Finally, we deduce that $Z_X$ is $\mc P$-complete.
\end{enumerate}

\medskip
\noindent
\textbf{Step 1}: We show that the canonical projection map $(Z_{\bb S})^\wedge_{\mc P}\otimes iX \longrightarrow (Z_X)^\wedge_{\mc P}$
is an equivalence. By \cite[Corollary~2.49]{QS25}, the top geometric fixed points of the $\mc P$-completion of a genuine
$G$-spectrum $Y$ are computed by the limit of the diagram
\[
\tau_{K_r}^G\cdots\tau_{K_0}^{K_1}
\Phi^{K_0}(Y)
\]
indexed by chains $K_0<K_1<\cdots<K_r<G$ in the subconjugacy preorder.

For $Y=Z_X$, we have
\[
\Phi^K(Z_X)\simeq
\begin{cases}
0, & K=1,\\
X, & K\neq 1,
\end{cases}
\]
where residual Weyl-group action is trivial. The analogous description
holds for $Z_{\bb S}$, with $X$ replaced by $\bb S_p$. Thus chains beginning at the trivial
subgroup contribute zero. Since these zero-valued chains are closed under passage to larger chains,
they can be omitted from the limit. 

For the remaining chains, by \ref{lem:tate-coefficient} comparisons are equivalences at every stage. These comparisons
are compatible with the Quigley--Shah gluing transformations, thus they assemble into ane quivalance of reconstruction diagrams.

The resulting indexing category has a finite skeleton. Since tensoring with $X$ is exact,
it commutes with this finite limit. Consequently,
$\Phi^G((Z_{\bb S})^\wedge_{\mc P}) \otimes X \xrightarrow{\simeq} \Phi^G((Z_X)^\wedge_{\mc P}).$ For every proper subgroup $H<G$, the comparison
is also an equivalence on $\Phi^H$, since $E\mc P$ restricts to a contractible $H$-space. The claim follows by the joint conservativity
of geometric fixed points.

\medskip
\noindent
\textbf{Step 2:} By the classical Segal conjecture, $i\bb S_p$ is Borel complete, hence $\mc P$-complete. Moreover,
Corollary~\ref{corr:free-piece}, applied to
$X=\bb S_p$, shows that
$EG_+\otimes i\bb S_p$ is $\mc P$-complete.

The isotropy-separation sequence
$EG_+\otimes i\bb S_p \longrightarrow i\bb S_p \longrightarrow Z_{\bb S}$
imply that $Z_{\bb S}$ is $\mc P$-complete.

\medskip
\noindent \textbf{Step 3:} First two steps give, 
$Z_X \simeq Z_{\bb S}\otimes iX \xrightarrow{\simeq} (Z_{\bb S})^\wedge_{\mc P}\otimes iX \xrightarrow{\simeq} (Z_X)^\wedge_{\mc P}.$ By construction, this composite is the $\mc P$-completion unit of $Z_X$. Hence $Z_X$ is $\mc P$-complete, or equivalently,
\[
\operatorname{map}_G
(\widetilde E\mc P,Z_X)\simeq 0.
\]
\end{proof}

\begin{thm}\label{thm:segal-p-groups}
Let $X$ be a bounded-below, $p$-complete spectrum, $I$-complete. Then, for every finite $p$-group $G$, the genuine
$G$-spectrum $iX$ is Borel complete. In particular, the canonical projection map
\[
(\bb S_p)^{BG}\otimes X
\xrightarrow{\simeq} X^{BG}
\]
is an equivalence.
\end{thm}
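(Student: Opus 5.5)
We argue by induction on $|G|$. The "in particular" clause follows from the Borel completeness of $iX$ by the reduction already explained at the start of \S\ref{sec:finite-groups}: the classical Segal conjecture together with colimit-preservation of categorical fixed points identifies the $G$-fixed points of the unit $iX\to\beta u(iX)$ with the projection map $(\bb S_p)^{BG}\otimes X\to X^{BG}$. For the Borel completeness itself, the base case $G=1$ is trivial. We assume $iX$ is Borel complete for all $p$-groups of order $<|G|$. Since restriction of $iX$ to a proper subgroup $H$ is $i_HX$, and Borel completion commutes with restriction, the fiber $F$ of $iX\to\beta u(iX)$ has $\Phi^H(F)\simeq 0$, equivalently $F^H\simeq 0$, for all proper $H<G$. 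Hence $F\simeq \widetilde E\mc P\otimes F$, and it suffices to show that $\Phi^G$ of the unit is an equivalence, i.e. that $\operatorname{map}_G(\widetilde E\mc P,F)\simeq 0$.

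\textbf{Case 1: $G$ is elementary abelian.} Here we cannot use Euler-class nilpotence. Instead we use \Cref{thm:ea-projection} for every subgroup $H\leq G$; each such $H$ is again elementary abelian. This gives that the $H$-fixed points of $i\bb S_p\otimes iX\to\beta u(iX)$ are equivalences for every $H$. These fixed points are $(i\bb S_p)^H\otimes X\simeq(\bb S_p)^{BH}\otimes X\to X^{BH}$. The first identification uses that categorical fixed points commute with tensoring by the inflated compact-generated object (colimit preservation together with $p$-completeness), and the second uses the classical Segal conjecture for $H$. Joint conservativity of categorical fixed points then gives Borel completeness. Strictly speaking this case needs no induction.

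\textbf{Case 2: $G$ is not elementary abelian.} We use the isotropy separation sequence $EG_+\otimes iX\to iX\to \widetilde EG\otimes iX$. Since $\beta u$ kills $\widetilde EG$-local objects after mapping out of $\widetilde E\mc P$, it suffices to show that $\operatorname{map}_G(\widetilde E\mc P,-)$ applied to $iX$ agrees with the same functor applied to $\beta u(iX)$; we aim for $\operatorname{map}_G(\widetilde E\mc P,iX)\simeq 0$. Since $\beta u(iX)$ is Borel, hence $\mc P$-complete, we have $\operatorname{map}_G(\widetilde E\mc P,\beta u iX)\simeq0$, so $\operatorname{map}_G(\widetilde E\mc P,F)\simeq\operatorname{map}_G(\widetilde E\mc P,iX)$. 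Now $\widetilde E\mc P\simeq S^{\infty\bar\rho}$, and mapping out of it into the free piece vanishes by Corollary~\ref{corr:free-piece}. Mapping into the singular piece $\widetilde EG\otimes iX$ vanishes by Lemma~\ref{lem:singular-piece}, which uses the inductive hypothesis. Exactness then gives $\operatorname{map}_G(\widetilde E\mc P,iX)\simeq 0$, so $\Phi^G(F)\simeq 0$. Combined with the vanishing on proper subgroups, this gives $F\simeq 0$.

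The main obstacle is the bookkeeping in the reduction that $\Phi^G(F)\simeq \operatorname{map}_G(\widetilde E\mc P,F)^G$ for $F$ concentrated at $G$. Concretely, one must check that $\widetilde E\mc P\otimes F\simeq F$ and use $\operatorname{map}_G(\widetilde E\mc P,\widetilde E\mc P\otimes F)\simeq \widetilde E\mc P\otimes F$, which holds because $\widetilde E\mc P$ is idempotent. One also needs care in Case 1 that \Cref{thm:ea-projection} applies to all subgroups simultaneously; otherwise, even in Case 1, the real work is already contained in the earlier results.
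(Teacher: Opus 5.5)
Your proposal is correct and follows the paper's proof. The elementary abelian case goes through Theorem~\ref{thm:ea-projection} for all subgroups plus joint conservativity; otherwise you use isotropy separation with Corollary~\ref{corr:free-piece} and Lemma~\ref{lem:singular-piece} to make $iX$ $\mathcal P$-complete, then the inductive hypothesis on proper subgroups. Your final step (the fiber of $\eta$ is concentrated at $G$ and $\mathcal P$-complete, hence zero) is just a reformulation of the paper's ``$\eta$ is an equivalence on $\mathcal P$-completions and both sides are $\mathcal P$-complete.''
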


\begin{proof}
We prove by induction on $|G|$ that $iX$ is Borel complete. If $G$ is elementary abelian, this follows
from Theorem~\ref{thm:ea-projection}, applied to $G$ and all its subgroups. Suppose now that $G$ is not elementary abelian, and assume that $iX$ is Borel complete for every $p$-group of order strictly less than $|G|$. Consider the isotropy-separation sequence
\[
EG_+\otimes iX\longrightarrow iX
\longrightarrow\widetilde EG\otimes iX.
\]
By \ref{corr:free-piece} and \ref{lem:singular-piece} we have  $iX$ is $\mc P$-complete. Now consider the Borel-completion unit
\[
\eta:iX\longrightarrow\beta u(iX).
\]
By the induction hypothesis, its restriction to every proper subgroup is an equivalence.
Consequently, $\eta$ induces an equivalence on $\mc P$-completions. Since both $iX$ and
$\beta u(iX)$ are $\mc P$-complete, $\eta$ itself is an equivalence.

Thus $iX$ is Borel complete for every finite
$p$-group $G$. Taking categorical fixed points
of the Borel-completion unit gives an equivalence
\[
(iX)^G\xrightarrow{\simeq}X^{BG}.
\]
By the classical Segal conjecture and the
colimit-preservation of categorical fixed points,
this identifies with the canonical projection map
\[
(\bb S_p)^{BG}\otimes X
\xrightarrow{\simeq}X^{BG}.
\]
\end{proof}

\section{Height-one semiadditive lifts}\label{sec:semiadditive}

In this section, we use the work of Carmeli-Schlank-Yanovski \cite{CSYHeight} and Yuan \cite{YuanSphere} to construct height-one
semiadditive lifts of the spectra considered in Theorem~\ref{thm:segal-p-groups}. The key input is the Borel projection formula established
in the preceding section.

Write $\mathcal S_1^{(p)}$ for the category of $1$-truncated $p$-finite spaces, whose objects are
finite disjoint unions of classifying spaces of finite $p$-groups. We denote by $\tsadi_1$ the universal
semiadditive category of height one, whose existence and universal property are established in \cite{CSYHeight}.

\subsection{Yuan's sphere and semiadditive lifts}

Recall that higher semiadditivity admits a description in terms of span functors. We use the characterization
of $\tsadi_1$ given in \cite[Proposition~2.12 and Example~2.14]{YuanSphere}.

For a functor  $ M:\operatorname{Span}(\mathcal S_1^{(p)}) \longrightarrow\Sp_{(p)},$ the spans $(A\xleftarrow{a}*\to*)$, coherently
parametrized by $a\in A$, give the \emph{Segal comparison} $\rho_A:M(A)\longrightarrow M(*)^A.$ The span $(*\leftarrow BC_p\to*)$ induces
an endomorphism $|BC_p|_M$ of $M(*)$.

\begin{rec}\label{rec:height-one}
By \cite[Proposition~2.12 and Example~2.14]{YuanSphere},
the category $\tsadi_1$ identifies with the full
subcategory of such functors satisfying:
\begin{enumerate}
    \item $M(*)$ is $p$-complete;
    \item $|BC_p|_M$ is an equivalence;
    \item $\rho_A$ is an equivalence for every
    $A\in\mathcal S_1^{(p)}$.
\end{enumerate}
The first two conditions are respectively the
completeness and divisibility conditions defining
height one. Evaluation at the point gives the
underlying-spectrum functor
\[
U:\tsadi_1\longrightarrow\Sp,
\]
which preserves limits.
\end{rec}

\begin{thm}[Yuan {\cite[Theorem~1.4 and Proposition~3.1]{YuanSphere}}]\label{thm:yuan-sphere}
There exists a lax symmetric monoidal functor
\[
S_Y:\operatorname{Span}(\mathcal S_1^{(p)})
\longrightarrow\Sp_{(p)}
\]
satisfying the following properties:
\begin{enumerate}
    \item $S_Y(*)\simeq\bb S_p$;
    \item $|BC_p|_{S_Y}\simeq\mathrm{id}_{\bb S_p}$;
    \item The Segal comparison $\rho_A:S_Y(A)\longrightarrow(\bb S_p)^A$ is an equivalence for every $A\in\mathcal S_1^{(p)}$.
\end{enumerate}
In particular, $S_Y$ determines a commutative algebra object in $\tsadi_1$ whose underlying spectrum is $\bb S_p$.
\end{thm}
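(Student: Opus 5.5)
This result is quoted from Yuan's work, so the plan is to assemble it from Yuan's construction and the classical Segal conjecture rather than to build anything new. I would build $S_Y$ as a global-equivariant object: to each $1$-truncated $p$-finite space $A=\coprod BG_i$ assign the $p$-completed categorical fixed points of the $p$-complete sphere, $S_Y(BG):=(\bb S_{G,p})^G$. A span $A\leftarrow C\to B$ is sent to the composite of restriction along the first leg and transfer along the second. Using Burnside-category language, the functoriality on spans comes from the fact that the $p$-complete global sphere is a Mackey functor on finite groupoids. Lax symmetric monoidality comes from external products $\bb S_G\otimes\bb S_H\to\bb S_{G\times H}$. Property (1) is then immediate, because $S_Y(*)=\bb S_p$.

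For property (3), I would note that the Segal comparison $\rho_{BG}$ is the map $(\bb S_{G,p})^G\to(\bb S_p)^{hG}=(\bb S_p)^{BG}$ induced by the Borel-completion unit. This is exactly the classical Segal conjecture for finite $p$-groups (Lin, Gunawardena, AGM, Carlsson). For disjoint unions, both sides take coproducts of spaces to products, which gives the general case.

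Property (2) is the main obstacle. The endomorphism $|BC_p|_{S_Y}$ is the transfer-restriction composite $\bb S_p\to(\bb S_{C_p,p})^{C_p}\to\bb S_p$ through the trivial subgroup. That composite is multiplication by the element $[C_p/C_p]$ in the Burnside ring, and it does \emph{not} equal the identity naively. The first thing I would try is Yuan's twist: do not use the raw global sphere, but replace the span functoriality with a version where the transfer along $BG\to *$ is renormalized. Concretely, split off, via the tom Dieck splitting after $p$-completion, the summands indexed by nontrivial subgroups using the idempotents of $A(G)^\wedge_p$ coming from the Segal conjecture. One then checks that the resulting cardinality of $BC_p$ becomes a unit, indeed $1$.

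Alternatively, I would invoke \cite{CSYHeight}. The $p$-complete $K(1)$-local-type divisibility statement says that after the Segal identification, the cardinality $|BC_p|$ acting on $\bb S_p$ equals the sum $\sum_{H\le C_p}\frac{1}{|W_H|}$-type formula. This can be computed to be invertible, and renormalizing by it gives $\mathrm{id}$. Verifying that this renormalization is coherent, i.e.\ still functorial on spans and lax monoidal, is the delicate point; I would defer to \cite[Theorem~1.4, Proposition~3.1]{YuanSphere}.

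Finally, conditions (1)--(3) match \Cref{rec:height-one}, so $S_Y$ lies in $\tsadi_1$. Lax monoidality makes it a commutative algebra there, and its underlying spectrum is $\bb S_p$.
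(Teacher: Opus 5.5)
Your construction and your treatment of (1) and (3) are sound. By the equivariant Barratt--Priddy--Quillen theorem, $(\bb S_G)^G\simeq K(\Fun(BG,\mathrm{Fin}))$, so your $S_Y$ is the object recorded in the remark after the theorem, and the Segal condition is exactly the Segal conjecture for $p$-groups.

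The gap is property (2). You treat it as the main obstacle and then try to repair it with a renormalization, but there is no obstacle. What you are missing is the correct covariant functoriality along a non-faithful map such as $BG\to *$. It is not a Mackey transfer through the trivial subgroup; a global Mackey structure only provides transfers along injective homomorphisms. In Yuan's construction it is left Kan extension, i.e.\ the orbit functor $T\mapsto T/G$ on finite $G$-sets. With this, $|BC_p|_{S_Y}$ is induced by $S\mapsto (S\text{ with trivial }C_p\text{-action})/C_p=S$, so it is literally the identity. Equivalently, in the tom Dieck splitting $(\bb S_{C_p})^{C_p}\simeq \bb S\oplus\Sigma^\infty_+BC_p$, inflation lands in the summand $\bb S$, and the orbit map is the identity there. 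Your own computation is also internally inconsistent: $[C_p/C_p]$ is the unit of $A(C_p)$, so multiplication by it \emph{is} the identity.

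The repairs you propose would not work.
\begin{enumerate}
\item For a finite $p$-group $G$ one has $|T^H|\equiv|T^G| \pmod p$ for all $H\le G$. So all mark homomorphisms agree modulo $p$, and $A(G)^\wedge_p$ is a local ring. There are no nontrivial idempotents with which to split off summands indexed by nontrivial subgroups.
\item Even if such summands could be discarded, the result would violate (3). The Segal conjecture identifies the whole of $(\bb S_{G,p})^G$, all tom Dieck summands included, with $(\bb S_p)^{BG}$.
\item The alternative ``renormalize by the cardinality'' step is not justified. You give no reason such a rescaling is compatible with composition of spans or with the lax monoidal structure, and it is not what \cite{YuanSphere} does. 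Your final deferral therefore points to an argument that is not there.
\end{enumerate}
Once the forward functoriality is taken to be left Kan extension, (2) is immediate and the rest of your outline goes through.
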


\begin{rmk}
Yuan constructs $S_Y$ using the $K$-theory of finite $A$-sets, setting
\[
S_Y(A)\simeq K(\Fun(A,\mathrm{Fin}))^\wedge_p.
\]
The functoriality with respect to spans is given by restriction and left Kan extension, providing the
coherent span functor structure; see \cite[Constructions~2.18--2.19]{YuanSphere}.
\end{rmk}

We now use Yuan's sphere to construct height-one semiadditive lifts of spectra satisfying the Borel
projection formula.

\begin{thm}[Semiadditive lifting criterion] \label{thm:semiadditive-lift}
Let $X$ be a $p$-complete spectrum such that, for every finite $p$-group $G$, the canonical map
$(\bb S_p)^{BG}\otimes X \xrightarrow{\simeq} X^{BG}$
is an equivalence. Then the span functor
\[
X_Y(A):=S_Y(A)\otimes X
\]
defines an object $\widetilde X\in\tsadi_1$ whose underlying spectrum is $X$.
Moreover, $|BC_p|_{\widetilde X}\simeq\mathrm{id}_X.$ In particular, every bounded-below, $p$-complete, $I$-complete spectrum admits a height-one semiadditive lift.

\end{thm}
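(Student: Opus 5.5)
We check the three conditions of Recollection~\ref{rec:height-one} for the functor $X_Y$, after first making it a span functor. Tensoring with a fixed object $X$ is an exact functor $\Sp_{(p)}\to\Sp_{(p)}$; more precisely, $(-)\otimes X$ followed by $p$-completion is exact. Composing Yuan's span functor $S_Y$ of Theorem~\ref{thm:yuan-sphere} with this functor therefore gives a functor
\[
X_Y:\operatorname{Span}(\mathcal S_1^{(p)})\longrightarrow\Sp_{(p)}.
\]
All coherences come from those of $S_Y$, so no further construction is needed. Since every tensor product is taken in $p$-complete spectra, $X_Y(*)\simeq\bb S_p\otimes X\simeq X$. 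This is $p$-complete, which gives condition (1), and it identifies the underlying spectrum of $\widetilde X$ with $X$.

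For condition (2), the endomorphism $|BC_p|_{X_Y}$ is obtained by applying the functor to the span $(*\leftarrow BC_p\to *)$. It is therefore $|BC_p|_{S_Y}\otimes\mathrm{id}_X$. By Theorem~\ref{thm:yuan-sphere}(2), $|BC_p|_{S_Y}\simeq\mathrm{id}_{\bb S_p}$, so $|BC_p|_{\widetilde X}\simeq\mathrm{id}_X$. This is an equivalence, and it also gives the ``moreover'' clause.

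The substantive step is condition (3). By naturality of the Segal comparison, $\rho_A^{X_Y}$ factors as
\[
S_Y(A)\otimes X\xrightarrow{\rho_A^{S_Y}\otimes X}(\bb S_p)^A\otimes X\longrightarrow X^A,
\]
where the second map is the canonical projection map. To justify this factorization, note that the spans $(A\leftarrow *\to *)$ act on $X_Y$ by the corresponding maps for $S_Y$ tensored with $X$. The Segal map is then the limit over $a\in A$ of these maps, and the assembly "tensor of a limit maps to the limit of tensors" is exactly the projection map. The first map is an equivalence by Theorem~\ref{thm:yuan-sphere}(3). For the second, write $A\simeq\coprod_i BG_i$ as a finite disjoint union with each $G_i$ a finite $p$-group. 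Both sides split as finite products over the components, and finite products commute with $\otimes X$. The map thus reduces to the maps $(\bb S_p)^{BG_i}\otimes X\to X^{BG_i}$, which are equivalences by hypothesis. The main point of care is to check that the composite above really is $\rho_A^{X_Y}$, compatibly and naturally in $A$. I would handle this by writing $\rho_A$ as the map into $\lim_A$ adjoint to the family of restrictions along the points, and using that $(-)\otimes X$ carries each such restriction to the corresponding one for $X_Y$.

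Finally, the last assertion follows by combining this criterion with Theorem~\ref{thm:segal-p-groups}. That theorem supplies the hypothesis for every bounded-below, $p$-complete, $I$-complete $X$.
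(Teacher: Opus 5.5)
Your proposal is correct and follows essentially the same route as the paper: postcompose Yuan's span functor with $-\otimes X$, read off $X_Y(*)\simeq X$ and $|BC_p|_{X_Y}\simeq |BC_p|_{S_Y}\otimes\mathrm{id}_X$ directly, and factor the Segal comparison as $\rho_A^{S_Y}\otimes X$ followed by the projection map. Your reduction of a general $A\simeq\coprod_i BG_i$ to its components via finite products, and your justification of the factorization, fill in details the paper leaves implicit when it says ``for $A=BG$''.
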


\begin{proof}
Postcomposing $S_Y$ with the functor $-\otimes X$ defines a coherent span functor
\[
X_Y:\operatorname{Span}(\mathcal S_1^{(p)})
\longrightarrow\Sp_{(p)}.
\]
Here tensor products are taken in the category of $p$-complete spectra. Since $S_Y(*)\simeq\bb S_p$,
we have $X_Y(*)\simeq X$. Moreover, the identity $|BC_p|_{S_Y}\simeq\mathrm{id}_{\bb S_p}$
implies $|BC_p|_{X_Y}\simeq\mathrm{id}_X.$

It remains to verify the Segal condition. For $A\in\mathcal S_1^{(p)}$, the Segal comparison
factors as
\[
S_Y(A)\otimes X
\xrightarrow{\rho_A\otimes X}
(\bb S_p)^A\otimes X
\longrightarrow X^A.
\]
The first map is an equivalence by Theorem~\ref{thm:yuan-sphere}. For $A=BG$,
the second map is an equivalence by hypothesis. All conditions of \ref{rec:height-one} are satisfied, so $X_Y$ defines an object
$\widetilde X\in\tsadi_1$ with underlying spectrum $X$. The final assertion follows from Theorem~\ref{thm:segal-p-groups}.

\end{proof}

\subsection{Cohomology of \texorpdfstring{$p$}{p}-finite spaces}

We now study the cohomology of $p$-finite spaces with coefficients in spectra admitting a height-one
semiadditive lift.

Recall that every simply connected $p$-finite space $A$ is $\tsadi_1$-acyclic, i.e., the canonical map
\[
M\longrightarrow M^A
\]
is an equivalence for every $M\in\tsadi_1$. This follows from \cite[Proposition~3.2.3(2)]{CSYHeight};
see also \cite[Remark~2.10]{YuanSphere}.

\begin{thm}[{\cite[Remark~2.10 and Corollary~1.5]{YuanSphere}}] \label{thm:one-type}
Let $X$ admit a lift to $\tsadi_1$. Then, for every $p$-finite space $A$, the canonical map
$X^{\tau_{\leq 1}A}\xrightarrow{\simeq}X^A$
is an equivalence. In particular, if $A$ is pointed and simply connected,
then
\[
F(\Sigma^\infty A,X)\simeq 0,
\qquad
\operatorname{Map}_*(A,\Omega^\infty X)\simeq *.
\]
These conclusions apply to every bounded-below
$p$-complete spectrum whose mod-$p$ homology is
$I$-complete.
\end{thm}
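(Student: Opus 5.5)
The plan is to work entirely inside $\tsadi_1$, using the lift $\widetilde X$ with $U\widetilde X\simeq X$, and then read off the statement through the limit-preserving functor $U$ of \Cref{rec:height-one}. For a $p$-finite space $A$, consider the cotensor $\widetilde X^A$ in $\tsadi_1$. Since $U$ preserves limits, $U(\widetilde X^A)\simeq X^A$. The canonical map $X^{\tau_{\leq 1}A}\to X^A$ is therefore the image under $U$ of the map $\widetilde X^{\tau_{\le1}A}\to\widetilde X^A$ induced by $A\to\tau_{\le1}A$. It suffices to show that this map is an equivalence in $\tsadi_1$.

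To prove that, first reduce to connected $A$: both sides turn disjoint unions into products, and $\tau_{\le1}$ commutes with disjoint unions. For $A$ connected, the fiber of $f\colon A\to\tau_{\le1}A=B\pi_1A$ is the universal cover $\widetilde A$. This is a simply connected $p$-finite space (it is $p$-finite because $A$ is, and the fibers of a map of $p$-finite spaces are $p$-finite). Write $\widetilde X^A\simeq\lim_{b\in B\pi_1A}\widetilde X^{f^{-1}(b)}$, i.e.\ as a right Kan extension along $f$ evaluated over the base. By the $\tsadi_1$-acyclicity of simply connected $p$-finite spaces recalled above (\cite[Proposition~3.2.3(2)]{CSYHeight}), each map $\widetilde X\to\widetilde X^{f^{-1}(b)}$ is an equivalence. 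Taking the limit over $B\pi_1A$ then gives $\widetilde X^{B\pi_1A}\xrightarrow{\simeq}\widetilde X^A$.

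The "in particular" follows by specialisation. If $A$ is pointed and simply connected, then $\tau_{\le1}A\simeq *$, so $X\to X^A=F(\Sigma^\infty_+A,X)$ is an equivalence. Splitting $\Sigma^\infty_+A\simeq\bb S\oplus\Sigma^\infty A$ via the basepoint gives $F(\Sigma^\infty A,X)\simeq0$. Applying $\Omega^\infty$ yields $\operatorname{Map}_*(A,\Omega^\infty X)\simeq\Omega^\infty F(\Sigma^\infty A,X)\simeq *$. The final sentence of the theorem is immediate from \Cref{thm:semiadditive-lift}, which provides the lift for bounded-below, $p$-complete, $I$-complete $X$ (with \Cref{thm:segal-p-groups} supplying its hypothesis).

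The main obstacle is the fibrewise step: acyclicity has to be applied \emph{parametrically} over $B\pi_1A$, and the comparison $\widetilde X\to\widetilde X^{\widetilde A}$ must be natural enough to pass to the limit. I would handle this with the local-system formulation of acyclicity in \cite{CSYHeight}: constant diagrams along a map with $\tsadi_1$-acyclic fibers are sent to equivalences. Alternatively, one can cite \cite[Remark~2.10, Corollary~1.5]{YuanSphere} directly, since they state exactly this for objects of $\tsadi_1$.
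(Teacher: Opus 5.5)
Your proposal is correct and follows essentially the same route as the paper. Both arguments take the lift $\widetilde X$ and use that the fibres of $A\to\tau_{\le1}A$ are simply connected $p$-finite spaces, hence $\tsadi_1$-acyclic. Both then write $\widetilde X^A$ as a limit over $\tau_{\le1}A$ of the fibrewise cotensors and conclude via $U$ and \Cref{thm:semiadditive-lift}.

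Your reduction to connected $A$ is harmless but unnecessary, and the naturality worry in your last paragraph is not a real obstacle. The comparison is the unit $\mathrm{const}\,\widetilde X\to f_*f^*\mathrm{const}\,\widetilde X$ of $B$-indexed diagrams, which is pointwise an equivalence.
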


\begin{proof}

Choose a lift $\widetilde X\in\tsadi_1$ of $X$, and let $q:A\longrightarrow B:=\tau_{\leq 1}A$
be the Postnikov truncation. For every $b\in B$, the fiber $A_b$ of $q$ is simply connected and
$p$-finite. By the height-one acyclicity theorem, the canonical map $\widetilde X\longrightarrow\widetilde X^{A_b}$
is an equivalence. Hence, we get $\widetilde X^B\xrightarrow{\simeq}\widetilde X^A.$

Finally, the assertion for bounded-below $p$-complete spectra with $I$-complete mod-$p$ homology follows from
Theorem~\ref{thm:semiadditive-lift}.
\end{proof}

\printbibliography
\end{document}